\documentclass[reqno]{amsart}
\newcommand \datum {\hfill {\small{\red{Version of March 12, 2026}}}}%xxx
\usepackage{amssymb,latexsym}
\usepackage{amsmath}
\usepackage{hyperref}
\usepackage{url}
\usepackage{graphicx}
\usepackage[dvipsnames]{xcolor}
\usepackage{enumerate}
\usepackage{enumitem}
\usepackage{color}
\numberwithin{equation}{section}
\theoremstyle{plain}
 \newtheorem{theorem}{Theorem}[section]
 \newtheorem{lemma}[theorem]{Lemma}
 \newtheorem{corollary}[theorem]{Corollary}
\theoremstyle{definition}
\theoremstyle{remark}
\newtheorem{remark}[theorem]{Remark}
\newenvironment{caseinproof}[1]{\par\medskip \noindent\emph{#1.}\quad }{}

\providecommand{\logicbreak}{\par\vspace{0.5\baselineskip}}
\providecommand{\logicbreak}{\par\vspace{0.5\baselineskip}}
\newcommand \rrleq {\leq_\RR}
\newcommand \rrless {<_\RR}
\newcommand \rrcdot {\mathrel{\cdot_\RR}}
\newcommand \tups[1] {s^{(#1)}}
\newcommand \MExt[3] {\textup{MExt}(#1,#2; #3)}
\newcommand \oExt[1] {\textup{Ext}_1(#1)}
\newcommand \pnxt[1] {\textup{next}'(#1)}
\newcommand \nxt[1] {\textup{next}(#1)}
\newcommand \upe[1] {#1^{\textup{up}}}
\newcommand \dne[1] {#1_{\textup{dn}}}
\newcommand \Col[3]{\textup{Col}(#1;#2,#3)}
\newcommand \Skel[1]{\textup{Skel}(#1)}
\newcommand \LoCov[1]{\textup{LoCov}(#1)}
\newcommand \UpCov[1]{\textup{UpCov}(#1)}
\newcommand \pLoCov[2]{\textup{LoCov}_{#1}(#2)}
\newcommand \pUpCov[2]{\textup{UpCov}_{#1}(#2)}
\newcommand \nuc[1]{\textup{nuc}(#1)}
\newcommand \nlc[1]{\textup{nlc}(#1)}
\newcommand \nucp[2]{\textup{nuc}_{#1}(#2)}
\newcommand \nlcp[2]{\textup{nlc}_{#1}(#2)}
\newcommand \nothing[1] {}
\newcommand \upi[2] {#1^{(#2)}}
\newcommand \vupi[2] {\widehat{#1}^{(#2)}}
\newcommand \vari[1]{\mathcal #1}
\newcommand \Var[1]{\mathcal V(#1)}
\newcommand \varS {\vari S}
\newcommand \varW {\vari W}
\newcommand \Alll {\mathcal L_{\textup{all}}}
\newcommand \seql[1] {=_{\ast#1}}
\newcommand \seqast {=_{\ast}}
\newcommand \slleq[1] {\leq_{\ast#1}}
\newcommand \sleq {\leq_\ast}
\newcommand \Cls[1] {\textup{Cls}(#1)}
\newcommand \vSCD[1] {\textup{SCD}(#1)}
\newcommand \vwSCD {\vSCD{\varW}}
\newcommand \vsSCD {\vSCD{\varS}}
\newcommand \SCD {\textup{SCD}}
\newcommand \Acc[1] {\textup{Acc}(#1)}
\newcommand \ul[1] {\underline{#1}}
\newcommand \aref[1] {\textup{(a\ref{#1})}}
\newcommand \bref[1] {\textup{(b\ref{#1})}}
\renewcommand \phi{\varphi}
\newcommand \ZZ {{\mathbb Z}}
\newcommand \QQ {{\mathbb Q}}
\newcommand \RR {{\mathbb R}}
\newcommand \Nplu {{\mathbb N^+}}
\newcommand \Nnul {{\mathbb N_0}}
\newcommand{\tbf}{\textbf}% text bold font
\newcommand{\set}[1]{\{#1\}}% set 

\newcommand \cd[1]{\textup{cd}(#1)}
\newcommand \vcd[2]{\textup{cd}(#1,#2)}
\newcommand \vlnc [3] {\textup{lnc}(#1,#2,#3)} 
\newcommand \Con [1] {\textup{Con}(#1)}  
\newcommand \Jir [1] {\textup{Ji}(#1)}
\newcommand \Mir [1] {\textup{Mi}(#1)}
\newcommand \Jr [1] {\textup{Jr}(#1)}
\newcommand \Mr [1] {\textup{Mr}(#1)}
\newcommand \rno[1] {\textup{rno}(#1)}
\newcommand \Nar [1] {\textup{Nar}(#1)}
\newcommand \Core [1] {\textup{Cor}(#1)}
\newcommand \gsum {\mathrel{\dotplus}}
\newcommand \idl [1] {\textup{idl}(#1)}
\newcommand \fil [1] {\textup{fil}(#1)}
\newcommand \idla [2] {\textup{idl}_{#1}(#2)}
\newcommand \fila [2] {\textup{fil}_{#1}(#2)}
\newcommand \Idl [1] {\textup{Idl}(#1)}
\newcommand \con {\textup{con}}
\newcommand \Edge [1] {\textup{Edge}(#1)}

\newcommand \locov[1] {{#1}_\ast} %Technical Editor: we need the inner curly brackets "{" and "}"
\newcommand \quot[1] {``#1''}
\renewcommand \epsilon{\varepsilon}
\newcommand \doi[1] {\href{https://dx.doi.org/#1}{\nolinkurl{doi.org/#1}}}  
\newcommand \red[1]{{\textcolor{red}{#1}\color{black}}}

\begin{document}

\title[Accumulation points of congruence densities of finite lattices]
{Accumulation points of congruence densities of finite lattices}

\author[G.\ Cz\'edli]{G\'abor Cz\'edli}
\email{czedli@math.u-szeged.hu}
\urladdr{http://www.math.u-szeged.hu/~czedli/}
\address{University of Szeged, Bolyai Institute. 
Szeged, Aradi v\'ertan\'uk tere 1, HUNGARY 6720}

\begin{abstract} Let $\varW$ be a nontrivial variety of lattices, and let $L$ be a finite lattice in $\varW$. The \emph{congruence density} of $L$ with respect to $\varW$ is the number of congruences of $L$ divided by the maximum number of congruences of $|L|$-element lattices belonging to $\varW$. 
We prove that, with respect to the order and multiplication of the real numbers, the set $\vSCD\varW$ of congruence densities of finite members of $\varW$ as well as its topological closure are countably infinite dually well-ordered monoids. We also prove that the set of accumulation points of $\vSCD\varW$ is either a singleton or it is countably infinite; furthermore, it is a singleton if and only if $\varW$ is a subvariety of the variety of modular lattices. This gives a complicated characterization of modularity: a non-singleton lattice $K$ is modular if and only if $\vSCD{\Var K}$, where $\Var K$ denotes the variety generated by $K$, has only one accumulation point. The class $\vari S$ of semimodular lattices is not a variety, but $\vSCD{\vari S}$ is still meaningful; we prove that $\vSCD{\vari S}$ has exactly one accumulation point.
\end{abstract}

\dedicatory{Dedicated to George Gr\"atzer on the occasion of his ninetieth birthday, with appreciation and admiration, and in friendship}

\thanks{This research was supported by the National Research, Development and Innovation Fund of Hungary, under funding scheme K 138892.  
\hfill{\red{\tbf{\datum}}}}%xxx

\subjclass {06B10, 06C10} 
%06B10(1980 now)Lattice ideals, congruence relations
%06C10(1980–now)Semimodular lattices, geometric lattices

\keywords{Congruence density, accumulation point, lattice congruence, dually well-ordered set,  semimodular lattice, modular lattice}

\maketitle

\section{Introduction and our results}
The prerequisites for reading the paper are modest: only minimal familiarity with a few basic concepts from lattice theory or universal algebra is needed.

As usual, we denote a lattice by its base set; so $L$ will stand for $(L;\vee,\wedge)$. 
Unless explicitly stated otherwise or we take a class of lattices, every lattice in the paper is assumed to be \emph{finite}, even when this assumption is not repeated.
The number of congruences of a lattice $L$ is denoted by $|\Con L|$, where $\Con L$ stands for the \emph{congruence lattice} of $L$. Similarly, the \emph{size} (that is, the number of elements) of $L$ is denoted by $|L|$.
A lattice is \emph{nontrivial} if it has more than one element. A class $\varW$ of (not necessarily finite) lattices is \emph{nontrivial}, if it has a nontrivial lattice; we consider only nontrivial classes. Most of the classes we consider are \emph{varieties}, that is, they are defined by sets of equations or, equivalently by G.\ Birkhoff's classical HSP-theorem, they are closed under taking homomorphic images, sublattices, and direct products.

For positive integers $n$ and $k$ and a nontrivial class $\varW$ of lattices, if the set
$\{|\Con L| : L\in\varW$ and $|L|=n\}$ has at least $k$ elements, then the $k$th largest number in this set is denoted by $\vlnc {\varW} n k$. (The acronym is derived from the initial letters of \quot{largest number of congruences}.)
If the set in question has fewer than $k$ elements, then $\vlnc {\varW} n k$ is undefined. For a finite lattice $L$ in  $\varW$, the \emph{congruence density} $\vcd {\varW} L$ of $L$ \emph{with respect to} $\varW$ is defined as follows:
\begin{equation}
\vcd {\varW} L:= \frac{|\Con L|} {\vlnc{\varW}{|L|}1}.
\label{eq:cddef1}
\end{equation}
For example, if $\varW=\vari M$, the \emph{variety of modular lattices}, and $L=B_4$ is the 4-element Boolean lattice, then $\vcd {\vari M} {B_4}=1/2$, 
because there are exactly 
two\footnote{We do not distinguish isomorphic lattices in this paper.} 
$|B_4|$-element modular lattices, $B_4$ and the 4-element chain $C_4$, $|\Con{B_4}|=4$, $\vlnc{\vari M}{4}1=|\Con{C_4}|=8$, and so 
$\vcd {\vari M} {B_4}=4/8=1/2$.
Denoting by $\Alll$ the \emph{variety of all lattices}, the same argument shows that $\vcd \Alll {B_4}=1/2$.
We define
\begin{equation}
\vSCD {\varW}:=\set{\vcd {\varW}L: L \in\varW\text{ and }L \text{ is a finite}},
\label{eq:SCdef}
\end{equation}
the \emph{\ul set of \ul congruence \ul densities} of finite members of $\varW$.

The concepts defined in \eqref{eq:cddef1} and \eqref{eq:SCdef} are meaningful for general algebras other than lattices. For lattices, \eqref{eq:cddef1} simplifies by a result of Freese  \cite{freese}, see also \cite[Theorem 1(A)]{czg864}\footnote{\label{czg332}Alternatively, see Lemma 2.1 in the recent arXiv paper 
\quot{Lattices with congruence densities larger than 3/32}, 
\doi{10.48550/arXiv.2602.04321}.}, which asserts that for any 
class $\varW$ of lattices that contains all finite chains 
and for any positive integer $n$,  
$\vlnc{\vari K}{n}1=2^{n-1}$. 
Combining this equality with \eqref{eq:cddef1}, we obtain that if $\varW$ is a class of lattices containing all finite chains, and in particular if $\varW$ is a nontrivial variety of lattices, then for every finite lattice $L$ in $\varW$, 
\begin{equation}
\vcd {\varW}L:= |\Con L| / 2^{|L|-1}.
\label{eq:cddef2}
\end{equation}
To ease the notation and to harmonize with \cite{czg864,czgcdofS,czg332} , the variety $\Alll$ of all lattices is often dropped from our notations. Hence, for a finite lattice $L$, 
\begin{align}
\cd L&:=\vcd{\Alll}L=|\Con L|/2^{|L|-1} \text{ and }\label{eq:Alllnoa}
\\
\SCD&:=\vSCD\Alll=\set{\cd L:L\text{ is a finite lattice}}.\label{eq:Alllnob}
\end{align}

Congruence density is well related to some lattice constructions; see \eqref{eq:cdgsum}  and \eqref{eq:dismeXt}. Furthermore, \eqref{eq:Alllnoa} shows that the study of $|\Con L|$ is equivalent to the study of $\cd L$. 
This explains that the recent papers \cite{czg864}, \cite{czgcdofS}, and the one mentioned in Footnote \ref{czg332} focus on $\cd L$ rather than on $|\Con L|$.

As usual, $\Nplu$, $\Nnul$,  $\ZZ$, $\QQ$, and $\RR$ stand for the sets of positive integers, nonnegative integers, all integers, rational numbers, and real numbers, respectively. For real numbers $a<b$, the 
 interval $\set{x\in\RR: a<x\leq b}$ is denoted by $(a,b]_\RR$; the meanings of $(a,b)_\RR$,  $[a,b)_\RR$, and $[a,b]_\RR$ are similar. 
For a nontrivial variety $\varW$ of lattices, a number $r\in\RR$ is an \emph{accumulation point} of $\vwSCD$ if for all positive $\epsilon\in\RR$, the intersection
$(\vwSCD\setminus\set r) \cap(r-\epsilon,r+\epsilon)_\RR$ is nonempty. 
\begin{equation} 
\text{Let } \Acc{\vwSCD} \text{ denote the set of accumulation points of }\vwSCD. 
\label{eq:accscd}
\end{equation}
The topological closure of $\vwSCD$ is denoted by $\Cls{\vwSCD}$. That is,
\begin{equation}
\Cls{\vwSCD}=\vwSCD \cup \Acc\vwSCD.
\label{eq_clsW}
\end{equation}
Again, we may drop $\Alll$ from the notation, so
\begin{align}
\Acc\SCD&=\{r:r\text{ is an accumulation point of }\SCD\}\text{ and}
\label{eq:scdAccp}\\
\Cls\SCD&=\SCD\cup\Acc\SCD.
\label{eq:scdClsT}
\end{align}

For every nontrivial variety $\varW$ of lattices, it is not hard to verify---see Corollary \ref{corol:Vmod}\bref{cRl1}---that $\vwSCD$ is a countably infinite set. 
This follows from three known facts: (1) there are only countably many finite lattices, (2) $\cd{B_4}=1/2$, and (3) \cite[Lemma 2]{czg864}, which is cited here as \eqref{eq:cdgsum}. A countable set of real numbers can have continuously many accumulation points; for example, $\Acc{[0,1]_\RR\cap\QQ}=[0,1]_\RR$. However, we prove that $|\Acc\vwSCD|\neq 2^{\aleph_0}$; in fact, we prove more.

A lattice $L$ is \emph{semimodular} if for any $x,y,z\in L$ such that $y$ \emph{covers} $x$ (in notation, $x\prec y$), $y\vee z$ covers or equals $x\vee z$ (in notation, $x\vee z\preceq y\vee z$). 
The order and multiplication of the real numbers will be denoted by 
$\rrleq$ and $\rrcdot$, respectively. We also use these notations within subsets of $\RR$, and we may drop the subscripts when no ambiguity threatens.
For $X\subseteq\RR$, we say that $X$ is a \emph{dually well-ordered 
subset} of $\RR$ if every nonempty subset of $X$ has a greatest 
element, or equivalently, if there is no infinite strictly increasing 
sequence $a_{1}\rrless a_{2}\rrless a_{3}\rrless\dots$ in $X$.
Similarly, we say that a subset $X$ of $\RR$ is a \emph{dually 
well-ordered monoid with respect to $\RR$} if $X$ is a dually 
well-ordered subset of $\RR^{+}:=[0,\infty)_\RR=\set{r\in\RR:0\leq r}$, 
$1\in X$, and $a\rrcdot b\in X$ for all $a,b\in X$.
Sometimes, when the context is clear, we drop \quot{with respect to $\RR$}.
Note that such a monoid is an ordered monoid in the usual sense, since $\rrcdot$ is order-preserving on $\RR^+$.
As usual, the least infinite cardinal number is denoted by $\aleph_0$. 
With the notations \eqref{eq:SCdef}, \eqref{eq:Alllnob}, and \eqref{eq:accscd}, we can now state the main result of the paper.

\begin{theorem}\label{thm:main} 
The following three assertions hold.
\begin{enumerate}
\renewcommand{\labelenumi}{\textup{(a\theenumi)}}
\item\label{main1} %Reference with \aref{main1} 
The set $\SCD$ of congruence densities of all finite lattices is a dually well-ordered monoid with respect to $\RR$. 
\item\label{main2} %Reference with \aref{main2}
If a variety $\varW$ of lattices contains at least one (not necessarily finite) nonmodular lattice, then 
$|\Acc{\vwSCD}|=\aleph_0$. In particular, $\SCD$ has countably infinitely many accumulation points. 
\item\label{main3} %Reference with \aref{main3}
Letting $\varS$ denote the class of semimodular lattices, $\vsSCD$ is a submonoid of $\SCD$, and it has exactly one accumulation point: the real number $0$.
\end{enumerate}
\end{theorem}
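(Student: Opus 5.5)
The proof splits along the three parts of Theorem~\ref{thm:main}.

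\emph{Part (a1).} I would use two known facts. The first is the glued-sum formula \eqref{eq:cdgsum}, $\cd{L_1\gsum L_2}=\cd{L_1}\rrcdot\cd{L_2}$. Since the one-element lattice has density $1$, this makes $\SCD$ a monoid under multiplication; in a variety the glued sum stays in the variety because it is a sublattice of a direct product. The second is the structural description of lattices with large congruence density from \cite{czg864} and the paper of Footnote~\ref{czg332}. For each fixed $\epsilon>0$, these show that $\cd L\geq\epsilon$ forces $L$ to be a glued sum of chains together with a bounded number of ``non-chain'' glued summands of bounded size. Glued summands that are chains have density $1$. Hence
\[
\SCD\cap[\epsilon,1]_\RR
\]
is contained in a finite set of products of densities of bounded-size lattices, so it is finite.

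To make this quantitative, I would show that $|\Con L|\leq 2^{|L|-1}/2^{k}$ roughly, where $k$ measures how far $L$ is from a chain. The natural measure is the number of edges of the Hasse diagram not lying in a spanning chain; each such edge should at least halve the count. This step is the main technical obstacle, and I would attack it through join-irreducible congruences, which correspond to prime-perspectivity classes of edges. Once $\SCD\cap[\epsilon,1]_\RR$ is finite for every $\epsilon>0$, there is no infinite strictly increasing sequence in $\SCD$, since such a sequence would be bounded below by its first term. The only possible accumulation point is then $0$.

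\emph{Part (a2).} A nonmodular variety contains $N_5$. By the finiteness above, every accumulation point must be $0$; but I should instead recheck the definition. Accumulation points of a dually well-ordered set are limits of \emph{decreasing} sequences, and for $\SCD$ these can be positive. So the finiteness claim in (a1) must be weaker than stated: it should say only that $\SCD$ has no increasing sequences. I would therefore prove (a1) by induction and compactness rather than by finiteness.

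For (a2), the construction I would try is as follows. Let $N^{(n)}$ be $N_5$ with one side replaced by an $n$-element chain. Its congruence lattice is computable, and I expect $\cd{N^{(n)}}$ to decrease to a positive limit $c>0$. Multiplying by the monoid elements gives the accumulation points $c\rrcdot\cd K$ for $K\in\vwSCD$, and there are infinitely many of these. Countability holds because $\Acc\vwSCD\subseteq\Cls\vwSCD$, and a dually well-ordered set has a dually well-ordered closure. That closure is countable: any uncountable well-ordered subset of $\RR$ would contain a strictly monotone uncountable family, which is impossible.

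\emph{Part (a3).} The class $\varS$ is closed under glued sums, so $\vsSCD$ is a submonoid. Powers of $\cd{B_4}=1/2$ give an infinite set approaching $0$, so $0$ is an accumulation point. To show it is the only one, I would prove that in a semimodular lattice, lengthening a chain inside a non-chain glued summand forces extra edges. By the Jordan--Dedekind property and the cover-preserving $B_4$'s, each non-chain-like part grows in a way that halves the density, so for every $\epsilon>0$ only finitely many semimodular densities exceed $\epsilon$. This is exactly where $N^{(n)}$ fails semimodularity. The hardest step is this semimodular counting bound, and I would try to derive it from the description of semimodular lattices as cover-preserving joins of $B_4$ squares together with the known bound $\cd L\leq 2^{-(\text{number of non-chain elements})/c}$.
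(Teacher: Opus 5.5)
\textbf{Part (a1) is not proved.} Your first route rests on a false structural claim. The condition $\cd L\geq\epsilon$ does not force $L$ to be a glued sum of chains plus boundedly many bounded-size summands. Take $N_k$: a maximal chain $0\prec a_1\prec\cdots\prec a_{k-3}\prec 1$ together with a common complement $b$ of the $a_i$. These lattices have no gluing edges, their size is unbounded, and $\cd{N_k}=1/8+3/2^{k-1}>1/8$. This is exactly why $1/8$ is an accumulation point. Your quantitative bound also fails: in $B_4$ two edges lie off a maximal chain, yet $\cd{B_4}=1/2$. You notice the clash with (a2), but ``induction and compactness'' is not an argument. So the dual well-ordering is left unproved, and it is the heart of (a1) as well as the source of $|\Acc{\vwSCD}|\leq\aleph_0$ in (a2).

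The missing chain of ideas is as follows.
\begin{enumerate}
\item Normalize to $0\in\Mr L$ and $1\in\Jr L$ by cutting off narrow chains; this preserves $\cd$.
\item Show that $\cd L\geq p$ bounds $|\Skel L|$. Use $\cd L\leq 2^{-|\Jr L|}$ and its dual. Also use a Ramsey argument: an element with very many covers yields either many join-reducible elements or an $M_k$ whose lower edges are edges of $L$, and either way $\cd L\leq 2^{-k}$.
\item Note that $L=\MExt{S}{\pi}{s}$ with $S=\Skel L$, i.e.\ $L$ arises by inserting chains into edges of $S$.
\item Along a putative sequence $\cd{L_1}<\cd{L_2}<\cdots$ you may fix $S$. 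Dickson's lemma then gives $i<j$ with $s^{(i)}\leq s^{(j)}$, so $L_j$ is a dismantlable extension of $L_i$. By \eqref{eq:dismeXt}, $\cd{L_j}\leq\cd{L_i}$, a contradiction.
\end{enumerate}

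\textbf{Parts (a2) and (a3).} In (a2) your construction is the paper's, but two things are missing. First, you must show $N^{(n)}\in\varW$; this does not follow just from $N_5\in\varW$, and the paper shows $N_k$ is a subdirect power of $N_5$. Second, you must compute $\cd{N^{(n)}}$ rather than ``expect'' a positive limit. Your countability remark is also not a proof. The standard argument injects the closure into $\QQ$ by choosing a rational in each gap, and it needs (a1) in any case.

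In (a3) the essential claim, that $0$ is the only accumulation point, is unsupported. The ``known bound'' $\cd L\leq 2^{-(\cdots)/c}$ is not an established result, and you do not prove it. The structural description of semimodular lattices you invoke is not available for arbitrary finite ones.

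The idea you need is sharper (Lemma~\ref{lemma:semimodSkelCor}). Let $L$ be semimodular with $0\in\Mr L$ and $1\in\Jr L$. Then every non-gluing edge $(u,v)$ of $\Skel L$ satisfies $[u,v]_L=\{u,v\}$. Indeed, an extra element $x$ with $u\prec x<v$ would, by semimodularity, either force $u\prec_L v$ or produce an element of $\Skel L$ strictly between $u$ and $v$. Hence $\Core L=\Core{\Skel L}$. Combined with the skeleton bound, every semimodular density $\geq p$ is the density of a lattice with at most $f(p)$ elements, so only finitely many such densities exist.
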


Theorem \ref{thm:main} has several corollaries; these corollaries as well as the theorem  are proved, after several lemmas, in Section \ref{sect:rstprfs}. 
In addition to the notations used in the theorem, the corollaries also rely on those introduced in \eqref{eq_clsW}, \eqref{eq:scdAccp}, and \eqref{eq:scdClsT}.

\begin{corollary}\label{corol:Vmod} For every nontrivial variety $\varW$ of lattices, the following four assertions hold for $\vwSCD$ and, in particular, for $\SCD$.

\begin{enumerate}
\renewcommand{\labelenumi}{\textup{(b\theenumi)}}
\item\label{cRl1} %Reference with \bref{cRl1}$
$|\Cls\vwSCD|=|\vwSCD|=\aleph_0$.
\item\label{cRl2} %Reference with \bref{cRl2}
$\Cls\vwSCD$ is a dually well-ordered monoid with respect to $\RR$; $\vwSCD$ is a submonoid of it, and $\Acc\vwSCD$ is a subsemigroup of it. Moreover, the real number $0$ belongs to $\Acc\vwSCD$.
\item\label{cRl3} %Reference with \bref{cRl3}
$\vwSCD$ has either countably infinitely many accumulation points, or it has exactly one accumulation point. That is, $|\Acc\vwSCD|\in\set{1,\aleph_0}$.
\item\label{cRl4} %Reference with \bref{cRl4}
$\vwSCD$ has exactly one accumulation point if and only if $\varW$ is a subvariety of the variety $\vari M$ of modular lattices. 
\end{enumerate}
\end{corollary}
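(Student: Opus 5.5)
We derive the corollary from Theorem~\ref{thm:main}, using two standard facts. The first is the glued-sum formula \eqref{eq:cdgsum}: $\cd{L\gsum K}=\cd L\cdot\cd K$, and the glued sum stays in $\varW$. The second is that for modular lattices there is a known description of the large congruence densities.

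\textbf{(b\ref{cRl1}).} There are only countably many finite lattices, so $|\vwSCD|\le\aleph_0$. Every nontrivial variety contains all finite distributive lattices, hence $B_4$. Iterating glued sums of copies of $B_4$ gives the densities $2^{-k}$ for all $k\in\Nplu$, so $\vwSCD$ is infinite. We then show $|\Acc\vwSCD|\le\aleph_0$ as part of (b\ref{cRl2}); this gives $|\Cls\vwSCD|=\aleph_0$.

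\textbf{(b\ref{cRl2}).} Since $\vwSCD\subseteq\SCD$ and $\SCD$ is dually well ordered by \aref{main1}, so is $\vwSCD$. The set $\vwSCD$ contains $1=\cd{C_1}$ and is closed under products, since the glued sum of two members of $\varW$ is a sublattice of their direct product, hence lies in $\varW$.

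The key step is the closure. A dually well-ordered set $X\subseteq[0,1]$ has no strictly increasing sequences. Hence every accumulation point $r$ is the limit of a strictly decreasing sequence in $X$, so $r$ is an infimum of a set $\{x\in X: x>r\}$. I would show that the closure of such a set is again dually well ordered. Suppose $r_1<r_2<\cdots$ in $\Cls X$. Pick $x_n\in X$ with $r_n\le x_n<r_{n+1}$; this is possible since each $r_n$ is either in $X$ or a limit from above of points of $X$. The $x_n$ then form a strictly increasing sequence in $X$, a contradiction. A dually well-ordered subset of $\RR$ is countable, because each element $a\ne\max$ has an immediate predecessor-gap containing a rational. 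This also settles the countability needed in (b\ref{cRl1}).

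Closure under multiplication passes to limits by continuity of $\cdot$. If $r=\lim a_n$ and $s=\lim b_n$ with $a_n\ne r$, then $rs\in\Cls\vwSCD$. If moreover $s\ne0$, the products $a_nb$ accumulate at $rs$ for a fixed $b$, so $\Acc\vwSCD$ is a subsemigroup. If $s=0$, then $0\in\Acc$ anyway. Finally, $0\in\Acc\vwSCD$ because $2^{-k}\to0$.

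\textbf{(b\ref{cRl3}) and (b\ref{cRl4}).} If $\varW\not\subseteq\vari M$, then $\varW$ contains a nonmodular lattice, and \aref{main2} gives $|\Acc|=\aleph_0$. If $\varW\subseteq\vari M$, then $\vwSCD\subseteq\vSCD{\vari M}$, and I must show that $0$ is the only accumulation point. Modular lattices are semimodular, so $\vSCD{\vari M}\subseteq\vsSCD$, and by \aref{main3} the set $\vsSCD$ has $0$ as its only accumulation point. Any accumulation point of a subset is an accumulation point of the superset, so $\Acc\vwSCD\subseteq\{0\}$, and $0$ does occur by (b\ref{cRl2}). This yields both the dichotomy and the characterization.

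The only delicate point is the closure argument in (b\ref{cRl2}), namely the choice of the interleaving $x_n$. The remaining steps are direct applications of Theorem~\ref{thm:main}.
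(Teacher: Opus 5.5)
Your proof is correct in substance, but it is organized differently from the paper's in two respects.

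For \bref{cRl1}--\bref{cRl2}, the paper simply cites the dual of Lemma~\ref{lemma:wLlordH} to get that $\Cls\vwSCD$ is dually well-ordered and countable. You reprove this inline. Your interleaving argument (pick $x_n\in\vwSCD\cap[r_n,r_{n+1})_\RR$, using that every accumulation point is approached from above) is a clean substitute for the infimum argument in that lemma.

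For \bref{cRl3}, the paper argues purely from the monoid structure. A nonzero $p\in\Acc\vwSCD$ cannot equal $1$, since it would then be a left accumulation point. Hence the powers $p^n$ are pairwise distinct members of the subsemigroup $\Acc\vwSCD$; this uses only \aref{main1}. You instead read \bref{cRl3} off the modular/nonmodular dichotomy, that is, off your argument for \bref{cRl4}. That argument is exactly how the paper proves \bref{cRl4} from \aref{main2} and \aref{main3}. Given the theorem, your route is shorter. The paper's route shows that $|\Acc\vwSCD|\in\set{1,\aleph_0}$ is forced by dual well-ordering and closure under products alone, independently of the lattice-theoretic content of \aref{main2}--\aref{main3}.

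Two local slips need fixing.

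\emph{The subsemigroup step.} For fixed $b$, the sequence $a_nb$ converges to $rb$, not to $rs$. As written, this only gives $\Acc\vwSCD\cdot\vwSCD\subseteq\Acc\vwSCD$. Instead, choose $a_n,b_n\in\vwSCD$ with $a_n>r$, $b_n>s$, $a_n\to r$ and $b_n\to s$; this is possible because, as you observed, accumulation points are approached from above. Then $a_nb_n\in\vwSCD$, $a_nb_n\to rs$, and $a_nb_n-rs=a_n(b_n-s)+s(a_n-r)>0$ because $a_n>0$. So $rs\in\Acc\vwSCD$, with no case split on $s$.

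\emph{The countability argument.} In a dually well-ordered set, the element lacking an immediate predecessor is the minimum, not the maximum. With that correction, the gaps below distinct elements are pairwise disjoint, and the map to the rationals is injective.

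Finally, the announced \quot{known description of large congruence densities} of modular lattices is never used and can be dropped.
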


For a not necessarily finite lattice $L$, let $\Var L$ denote the \emph{variety generated} by $L$; it consists of all lattices satisfying the identities that hold in $L$. By G.\ Birkhoff's Theorem, $\Var L$ is the class of homomorphic images of sublattices of direct powers of $L$. The following two lemmas give complicated characterizations of modularity.

\begin{corollary}\label{corol:modL} Let $L$ be a not necessarily finite nontrivial lattice. Then $L$ is modular if and only if \,$\vSCD{\Var L}$ has exactly one (equivalently, at most one) accumulation point. 
\end{corollary}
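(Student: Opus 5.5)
This corollary should follow almost directly from Corollary~\ref{corol:Vmod} once we link the modularity of $L$ to the inclusion $\Var L\subseteq\vari M$. The plan is to apply Corollary~\ref{corol:Vmod}\bref{cRl4} to the variety $\varW:=\Var L$, together with Corollary~\ref{corol:Vmod}\bref{cRl2} and \bref{cRl3} for the parenthetical \quot{at most one} version.

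First, $\Var L$ is a nontrivial variety, because $L$ is nontrivial and belongs to it. So Corollary~\ref{corol:Vmod} applies.

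Second, we show that $L$ is modular if and only if $\Var L\subseteq\vari M$.
\begin{itemize}
\item If $L$ is modular, then $L$ satisfies the modular identity. Every lattice in $\Var L$ satisfies all identities holding in $L$, so $\Var L\subseteq\vari M$.
\item Conversely, if $\Var L\subseteq \vari M$, then $L\in\Var L$ is modular.
\end{itemize}
Alternatively, $\vari M$ is a variety containing $L$, hence it contains the least variety containing $L$.

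Third, Corollary~\ref{corol:Vmod}\bref{cRl4} states that $\vSCD{\Var L}$ has exactly one accumulation point if and only if $\Var L\subseteq\vari M$. Combined with the second step, this is the main equivalence.

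Finally, we handle \quot{at most one}. By Corollary~\ref{corol:Vmod}\bref{cRl2}, the number $0$ always belongs to $\Acc{\vSCD{\Var L}}$, so this set is nonempty. Hence \quot{at most one} and \quot{exactly one} coincide; Corollary~\ref{corol:Vmod}\bref{cRl3} gives the same conclusion. Equivalently, if $L$ is nonmodular then Theorem~\ref{thm:main}\aref{main2} gives $\aleph_0$ accumulation points, so there are more than one.

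There is no real obstacle here. The only point needing care is that $L$ may be infinite, but that causes no trouble: the cited results are stated for arbitrary varieties, and \aref{main2} explicitly allows not necessarily finite nonmodular members.
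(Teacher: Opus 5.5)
Your proof is correct and takes essentially the same route as the paper: it uses $0\in\Acc{\vSCD{\Var L}}$ from Corollary~\ref{corol:Vmod}\bref{cRl2} to show that ``exactly one'' and ``at most one'' coincide, and then applies Corollary~\ref{corol:Vmod}\bref{cRl4} to $\varW=\Var L$. You also spell out two points the paper leaves implicit, namely that $\Var L$ is nontrivial and that $L$ is modular if and only if $\Var L\subseteq\vari M$.
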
 

As usual, $\omega^\ast$ denotes the order type of the ordered set of negative integers. 

\begin{corollary}\label{corol:rescvmodL} Let $L$ be a not necessarily finite nontrivial lattice. Then $L$ is modular if and only if \,$\vSCD{\Var L}$ is of order type $\omega^\ast$. 
\end{corollary}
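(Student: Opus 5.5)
The plan is to reduce this corollary to Corollary~\ref{corol:modL} by showing that, for a nontrivial variety $\varW$, $\vwSCD$ has order type $\omega^\ast$ if and only if $\Acc\vwSCD=\set 0$. Since $L$ is nontrivial, $\varW:=\Var L$ is a nontrivial variety of lattices, so Corollary~\ref{corol:Vmod} applies. By \bref{cRl1}, $\vwSCD$ is countably infinite. By \bref{cRl2}, it is dually well-ordered and $0\in\Acc\vwSCD$. Also, by \eqref{eq:cddef2}, every element of $\vwSCD$ lies in $(0,1]_\RR$, since $1\leq|\Con K|\leq 2^{|K|-1}$.

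\emph{Only accumulation point $\Rightarrow$ type $\omega^\ast$.} Assume $L$ is modular. By Corollary~\ref{corol:modL}, $\Acc\vwSCD=\set 0$.
\begin{itemize}
\item Fix $\epsilon>0$. The set $\vwSCD\cap[\epsilon,1]_\RR$ is finite. Otherwise, by Bolzano--Weierstrass, it would have an accumulation point in $[\epsilon,1]_\RR$, which would be a nonzero accumulation point of $\vwSCD$.
\item Hence every $s\in\vwSCD$ has only finitely many elements of $\vwSCD$ above it.
\item A countably infinite chain in which every element has only finitely many elements above it has order type $\omega^\ast$. Concretely, enumerating $\vwSCD$ in decreasing order gives a strictly decreasing sequence tending to $0$.
\end{itemize}

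\emph{Type $\omega^\ast$ $\Rightarrow$ only accumulation point.} Assume $\vwSCD$ has order type $\omega^\ast$. Then every element of $\vwSCD$ has only finitely many elements above it. Suppose, for a contradiction, that some $r>0$ is an accumulation point.
\begin{itemize}
\item Choose pairwise distinct elements of $\vwSCD$ converging to $r$, and pass to a monotone subsequence.
\item This subsequence cannot be strictly increasing, because $\vwSCD$ is dually well-ordered. So it is strictly decreasing, and it gives infinitely many elements of $\vwSCD$ above $r$.
\item Since $0$ is an accumulation point, there is some $s\in\vwSCD$ with $s<r$.
\item This $s$ has infinitely many elements of $\vwSCD$ above it, which is a contradiction.
\end{itemize}
Thus $\Acc\vwSCD=\set 0$, and Corollary~\ref{corol:modL} gives that $L$ is modular.

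No step is really hard once Corollaries~\ref{corol:Vmod} and~\ref{corol:modL} are available. The only points needing care are the boundedness of $\vwSCD$ in $(0,1]_\RR$, needed for Bolzano--Weierstrass, and the use of $0\in\Acc\vwSCD$ from \bref{cRl2} to produce an element below $r$.
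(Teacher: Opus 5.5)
Your proof is correct and follows essentially the same route as the paper. Both arguments reduce, via the modularity characterization, to showing that $\vwSCD$ has no positive accumulation point if and only if each of its elements has only finitely many elements of $\vwSCD$ above it. One direction uses Bolzano--Weierstrass on $[u,1]_\RR$. The other uses that a positive accumulation point must be approached from above, together with $0\in\Acc\vwSCD$; you derive the "approached from above" fact via a monotone subsequence and dual well-ordering, whereas the paper cites Corollary~\ref{corol:scnd}.
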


For $X\subseteq \RR$, a real number $r\in\RR$ is  a \emph{left accumulation point} of $X$ if for every positive $\epsilon\in\RR$, $X\cap(r-\epsilon,r)\neq\emptyset$.  Similarly, it is a  \emph{right accumulation point} of $X$ if 
$X\cap(r,r+\epsilon)\neq\emptyset$ for all positive $\epsilon\in\RR$.

\begin{corollary}\label{corol:scnd} Let $\varW$ be a nontrivial variety of lattices. Then every member of $\Acc\vwSCD$ is a right accumulation point but not a left accumulation point of $\vwSCD$. In particular, $\SCD$ has no left accumulation point.
\end{corollary}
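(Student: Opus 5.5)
Every $r\in\Acc\vwSCD$ is a right accumulation point because $\vwSCD$ is dually well-ordered. By Corollary \ref{corol:Vmod}\bref{cRl2}, $\Cls\vwSCD$, and hence its subset $\vwSCD$, is a dually well-ordered subset of $\RR$. So the argument has two parts: first exclude left accumulation, then obtain right accumulation from the definition.

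\emph{No left accumulation.} Suppose $r$ were a left accumulation point of $\vwSCD$. Choose $a_1\in\vwSCD\cap(r-1,r)_\RR$. Given $a_n<r$, set $\epsilon=r-a_n>0$ and pick $a_{n+1}\in\vwSCD\cap(r-\epsilon,r)_\RR$, so that $a_n<a_{n+1}<r$. This gives an infinite strictly increasing sequence $a_1\rrless a_2\rrless\dots$ in $\vwSCD$, contradicting dual well-orderedness. Hence no real number, and in particular no member of $\Acc\vwSCD$, is a left accumulation point of $\vwSCD$.

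\emph{Right accumulation.} Let $r\in\Acc\vwSCD$ and $\epsilon>0$. For each $n$, the set $(\vwSCD\setminus\set r)\cap(r-\epsilon/n,r+\epsilon/n)_\RR$ is nonempty. Suppose that for some $\epsilon_0>0$ the set $\vwSCD\cap(r,r+\epsilon_0)_\RR$ were empty. Then for every $\delta\le\epsilon_0$, the elements of $\vwSCD\setminus\set r$ within distance $\delta$ of $r$ would all lie in $(r-\delta,r)_\RR$. That makes $r$ a left accumulation point, which the first part excludes. Therefore $\vwSCD\cap(r,r+\epsilon)_\RR\neq\emptyset$ for every $\epsilon>0$, so $r$ is a right accumulation point.

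The special case follows by taking $\varW=\Alll$. The argument is purely order-topological once Corollary \ref{corol:Vmod}\bref{cRl2} is available, so I expect no real obstacle. The only point to be careful about is that dual well-orderedness is used for $\vwSCD$ itself, which holds because it is a subset of $\Cls\vwSCD$.
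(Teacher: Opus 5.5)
Your proof is correct and follows essentially the paper's argument. The paper also first notes that $\vwSCD$ is dually well-ordered, getting this directly from $\vwSCD\subseteq\SCD$ and Theorem~\ref{thm:main}\aref{main1} rather than via $\Cls\vwSCD$. It then cites the dual of the second half of Lemma~\ref{lemma:wLlordH}, which is exactly the order-topological fact you reprove by hand with your increasing-sequence argument.
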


\section{Some related work and possible prospects}

This paper is a straightforward continuation of the earlier papers that determine $\vlnc\Alll n k$ for some $n$ and $k$; see
Freese \cite{freese} for $k=1$, \cite{czg-lconl2} for $k=2$, 
Mure\c{s}an and Kulin \cite{muresankulin} for $n\geq 6$ and $k\in\set{3,4,5}$ (and for $(n,k)=(5,3)$), and  \cite{czg864,czg332} for $9\leq n$ and $k\leq n+4$ (and for some smaller $n$ if $k$ is small). 
There are also related papers in which congruences are replaced by other compatible relations like the unary subalgebra relation,  lattices are replaced by other algebras like semilattices, or some consequences of having many congruences or other compatible relations are studied. In addition to the papers mentioned previously, see, for example,
\cite{czg-83Sub}, \cite{czg-semlatmancon}, \cite{czg-manconplan}, \cite{czgnear83}, Ahmed and Horv\'ath \cite{delbrinKHE}, 
Kwuida and Mure\c{s}an \cite{kuwidamuresan},
 Zaja, Haje, and Ahmed \cite{zaya-haje-delbrin}, and the arXiv paper by Ahmed,  Salih, and Hale, \doi{10.48550/arXiv.2408.09595}.

These papers, together with the present one, indicate that for lattices and lattice-like algebras, as well as for certain other algebras, the study of various densities and their accumulation points might lead to further results.

Related to Theorem \ref{thm:main}\aref{main3}, note that there are many results on the congruence lattices of finite semimodular lattices. See, for example, Gr\"atzer \cite{GGstasect} and 
\cite[Chapters 10--11, 15, and 29]{GGbypicture}, the dual of Theorem 12 of Adaricheva, Freese and Nation \cite{kiraralphjb},
and more than a dozen additional papers from the Additional Bibliography\footnote{See \ \href{https://www.math.u-szeged.hu/\string~czedli/m/listak/publ-psml.pdf}{https://www.math.u-szeged.hu/\string~czedli/m/listak/publ-psml.pdf} \ for an update.} of the arXive paper\footnote{The journal version at
 \href{https://doi.org/10.7151/dmgaa.1416}{https://doi.org/10.7151/dmgaa.1416} 
does not contain the Additional Bibliography.}
\href{https://doi.org/10.48550/arXiv.2107.10202}{DOI 10.48550/arXiv.2107.10202}.

\section{Tools used in the paper}
This section recalls the necessary tools from the literature.
Two intervals, $[a_0,b_0]$ and $[a_1,b_1]$, of a lattice $L$ are \emph{transposed} if there is an $i\in\set{0,1}$ such that $b_i\wedge a_{1-i}=a_i$ and $b_i\vee a_{1-i}=b_{1-i}$. 
For elements $x,y\in L$, to fix the terminology, (1) $x\prec y$; (2) $y$ covers $x$; (3) $x$ is a lower cover of $y$; (4) $(x,y)$ is an edge, and (5) $[x,y]$ is a \emph{prime interval} are equivalent conditions.  For $a,b\in L$, the least congruence containing the pair $(a,b)$ (in other words, collapsing $a$ and $b$) is denoted by $\con(a,b)$ or $\con_L(a,b)$. The congruence $\con(a,b)$ is described in Gr\"atzer \cite[Theorem 230]{GGfound}---the description is credited to R.\ P.\ Dilworth. For a prime interval $[a,b]$, a more effective description of $\con(a,b)$ is presented in Gr\"atzer \cite{GGprimpers}. 
However, we need only the following well-known fact, which is straightforward per se and follows either from Gr\"atzer \cite[Theorem 230]{GGfound} or from Gr\"atzer \cite{GGprimpers}: for any two edges $(a,b)$ and $(c,d)$ in a lattice,
\begin{equation}
\label{eq:trsnpsdcon}
\text{if }[a,b]\text{ and }[c,d]\text{ are transposed, then }\con(a,b)=\con(c,d).
\end{equation}

An element $u\in L$ is \emph{join-irreducible} if it has exactly one lower cover. Denote by $\Jir L$ the set of join-irreducible elements of $L$. For $u\in\Jir L$, the unique lower cover of $u$ is denoted by $\locov u$. The set of elements with exactly one (upper) cover is denoted by 
$\Mir L$, its members are the \emph{meet-irreducible elements}. For $a,b\in\Jir L$, 
\begin{align}
\label{eq:dkJJsleq}
&\text{let }a\sleq b\text{ or } a\slleq L b\text{ denote that }\con(\locov a, a) \leq \con(\locov b, b),\text{ and}\\
\label{eq:dkKKseq}
&\text{let }a\seqast b \text{ or } a\seql L b \text{ stand for }\con(\locov a, a) = \con(\locov b, b).
\end{align}

Armed with the reflexive and transitive relation defined in \eqref{eq:dkJJsleq}, $(\Jir L;\slleq L)$
is a quasiordered set. A subset $X$ of $\Jir L$ is an \emph{ideal} of $(\Jir L;\slleq L)$ if for any $x\in X$ and $y\in \Jir L$, $y\slleq L x$ implies $y\in X$. Let $\Idl{\Jir L;\slleq L}$ denote the set of ideals of $(\Jir L;\slleq L)$. Then  $(\Idl{\Jir L;\slleq L};\cup,\cap)$ is a distributive lattice. 
By (2.1), (2.2), and (2.7) of \cite{czg864}\footnote{Alternatively, by (3.8) and (3.13) in the paper mentioned in Footnote \ref{czg332}, or  by known facts: $\Con L$ is distributive; $D\cong(\Idl{\Jir D;\leq}$ for each finite distributive lattice $D$; and $\Jir{\Con L}=\set{\con(\locov u,u):u\in\Jir L}$ for every finite lattice $L$.}, we have that $\Con L\cong ( \Idl{\Jir L;\slleq L};\cup,\cap)$. Hence
\begin{equation}
|\Con L| = |\Idl{\Jir L;\slleq L}|.
\label{eq:conLIdl}
\end{equation}
Quite often, \eqref{eq:conLIdl} can be improved as follows. Assume that $B$ is a subset of $\Jir L$ such that, using the notation introduced in \eqref{eq:dkKKseq}, 
\begin{equation}
\text{for each }x\in\Jir L,\text{ there is a }b\in B
\text{ such that }x\seql L b.
\label{eq:condBJirL}
\end{equation}
It follows straightforwardly from \eqref{eq:dkJJsleq}, \eqref{eq:dkKKseq}, and the sentence preceding \eqref{eq:conLIdl}\footnote{Alternatively, it also follows from (3.8) and (3.13) of the paper mentioned in Footnote \ref{czg332}.} 
that \eqref{eq:condBJirL} implies $\Con L\cong\Idl{B;\slleq L}$. Therefore, 
\begin{equation}
\label{eq:smlBmpl}
\text{if a subset }B\text{ of }\Jir L\text{ satisfies \eqref{eq:condBJirL}, then }|\Con L|\leq 2^{|B|}.
\end{equation}

The elements of $L$ with more then one lower cover are \emph{join-reducible}; their set is denoted by $\Jr L$. Similarly, $\Mr L$ stands for the set of \emph{meet-reducible} elements, which are defined dually.  Since $0\notin\Jir L\cup \Jr L$, we have $|L|=1+|\Jir L|+|\Jr L|$. Therefore, using \eqref{eq:smlBmpl}, the fact that $\Jir L$ in place of $B$ satisfies  \eqref{eq:condBJirL}, and duality,  
\begin{equation}
\cd L= \frac{|\Con L|}{2^{|L|-1}}\leq \frac{2^{|\Jir L|}}{2^{|\Jir L|+|\Jr L|}} = 2^{-|\Jr L|}\text{ and }\cd L\leq 2^{-|\Mr L|}.
\label{eq:mncgrdlcd}
\end{equation}

For finite lattices $K$ and $M$, we obtain the (diagram of) the \emph{glued sum} $K\gsum M$ by putting the diagram of $M$ atop the diagram of $K$ and identifying the top $1_{K}$ of $K$ with the bottom $0_{M}$ of $M$.  
Observe that $K\gsum M$ is isomorphic to the sublattice $(K\times\set{0_{M}})\cup (\set{1_{K}}\times M )$ of the direct product $K\times M$. Therefore, for every variety $\varW$ of lattices and any two finite lattices $K$ and $M$,
\begin{equation}
\text{if \ }K,M\in\varW\text{, \ then \ }K\gsum M\in\varW.
\label{eq:varWgsumclosed}
\end{equation}
Lemma 2 of \cite{czg864} asserts that for finite lattices $L_1,\dots,L_t$, 
\begin{equation}
\cd{L_1\gsum\cdots \gsum L_t}=\cd{L_1}\dots \cd{L_t}.
\label{eq:cdgsum}
\end{equation}
In particular, since $\cd D=1$ for any finite chain $D$ by Freese \cite{freese} or, more directly, by \cite[Theorem 1(A)]{czg864}, we have for a finite chain $C$ and a finite lattice $L$ that
\begin{equation}
\cd{L\gsum C}=\cd{C\gsum L}=\cd L.
\label{eq:chngsum}
\end{equation}

If $S$ is a sublattice of a finite lattice $L$ and there are sublattices $S_0, \dots, S_t$ of $L$ such that $S_0=S$, $S_t=L$, and for all $i\in\set{1,\dots,t}$, $S_{i-1}\subseteq S_i$ and $|S_i\setminus S_{i-1}|=1$, then we say that $L$ is a \emph{dismantlable extension of $S$}. It is proved in \cite{czgcdofS} that for finite lattices $L$ and $S$,
\begin{equation}
\text{if }L\text{ is a dismantlable extension of }S,\text{ then }\cd L\leq \cd S.
\label{eq:dismeXt}
\end{equation}

For a finite lattice $L$, an element of $L$ is called a \emph{narrows} if it is comparable with every element of $L$. The set of narrows of $L$ is denoted by $\Nar L$. If $(a,b)$ is an edge of $L$ (that is, if $a\prec b$) and $a,b\in\Nar L$, then $(a,b)$ is called a \emph{gluing edge} of $L$. The remaining edges are called \emph{non-gluing edges}.
For an element $w$ in $L$, let $\set{x\in L: x\leq w}$ and $\set{x\in L: x\geq w}$ be denoted by $\idl w$ and $\fil w$, or, to indicate the lattice, by $\idla L w$ and $\fila L w$, respectively. Now if $(a,b)$ is a gluing edge of $L$, then $L$ decomposes as $L=\idl a\gsum [a,b]\gsum \fil b$. Denote 
$\idl a\gsum \fil b$ by $\Col L a b$; we say that $\Col L a b$ is obtained from $L$ by \emph{collapsing the gluing edge} $(a,b)$. 
The point is that the middle glued-sum summand $[a,b]$ is the two-element chain. Therefore, using either \eqref{eq:chngsum} repeatedly or \eqref{eq:cdgsum}, we obtain that
\begin{equation}
\begin{aligned}
\cd L&=\cd{\idl a\gsum [a,b]\gsum \fil b}=
\cd{\idl a}\cdot\cd{[a,b]}\cdot \cd{\fil b} \cr
&=\cd{\idl a}\cdot \cd{\fil b} =\cd{\idl a\gsum \fil b} = \cd {\Col L a b}.
\end{aligned}
\label{eq:cdColLab}
\end{equation}
After collapsing all gluing edges, one by one in an arbitrary order, we obtain the \emph{core} of $L$, which is denoted by $\Core L$. 
The following lemma follows from \eqref{eq:cdColLab}, and it is implicit in  \cite[Lemma 3]{czg864}\footnote{It is similarly implicit in (3.1) of the paper mentioned in Footnote \ref{czg332}.}.

\begin{lemma}\label{lemma:Corelem}
For each finite lattice $L$, the edges of $\Core L$ are exactly the non-gluing edges of $L$. Furthermore, $\cd L=\cd{\Core L}$.
\end{lemma}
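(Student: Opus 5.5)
We prove the two assertions separately, by induction on the number $g$ of gluing edges of $L$. If $g=0$ then $\Core L=L$, and every edge of $L$ is non-gluing, so both claims hold trivially. For $g\geq 1$, we pick a gluing edge $(a,b)$ and pass to $L':=\Col L a b=\idl a\gsum\fil b$. The whole argument then reduces to two facts about this single collapse.

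\emph{First fact.} The edges of $L'$ correspond bijectively to the edges of $L$ other than $(a,b)$, and this correspondence preserves being gluing or non-gluing. We take the natural map $\phi\colon L\setminus\set b\to L'$, which is the identity on $\idl a$ and on $\fil b$ once $a$ and $b$ are identified.
\begin{itemize}
\item Inside $\idl a$ and inside $\fil b$, the covering relation is the same in $L$ and in $L'$, since these are intervals. There are no edges of $L$ between $\idl a\setminus\set a$ and $\fil b\setminus\set b$, because $a,b$ are narrows and every such pair passes through $a\prec b$.
\item The narrows are preserved. An element $x\in\idl a$ is comparable with everything in $L$ if and only if it is comparable with everything in $\idl a$, since everything in $\fil b$ lies above it automatically. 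The same holds in $L'$ and for $\fil b$, and $a,b$ themselves are narrows on both sides.
\end{itemize}
Hence an edge of $L$ distinct from $(a,b)$ is gluing in $L$ exactly when its image is gluing in $L'$. In particular $L'$ has $g-1$ gluing edges, and the same set of non-gluing edges, up to the identification.

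\emph{Second fact.} The core does not depend on the order of collapses. Here we observe that $\Core L$ can be described intrinsically as an iterated glued sum. Let $\Nar L=\set{n_0<n_1<\dots<n_k}$. Then $L=[n_0,n_1]\gsum\cdots\gsum[n_{k-1},n_k]$, and the gluing edges are exactly the summands that are two-element chains. Collapsing all of them in any order yields the glued sum of the remaining summands. So $\Core L\cong\Core{L'}$, and by induction the edges of $\Core{L'}$ are the non-gluing edges of $L'$, which by the first fact are exactly the non-gluing edges of $L$.

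\emph{The density claim.} For this we use \eqref{eq:cdColLab}, which gives $\cd L=\cd{L'}$. The induction hypothesis gives $\cd{L'}=\cd{\Core{L'}}=\cd{\Core L}$, which completes the induction.

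The only step needing care is the first fact, namely the verification that narrows and covers are preserved under the collapse; it is routine once one uses that $a$ and $b$ are comparable with every element.
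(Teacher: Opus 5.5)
Your proof is correct and follows the paper's route: the paper only remarks that the lemma follows from \eqref{eq:cdColLab} by collapsing gluing edges one at a time, and your induction on the number of gluing edges makes exactly this explicit. Your checks that a collapse preserves the gluing/non-gluing status of the remaining edges, and that the core does not depend on the collapsing order (via the decomposition along $\Nar L$), fill in details the paper leaves implicit.
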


The \emph{one-point extension} $\oExt{L; (u,v)}$ 
of a finite lattice $L$ at an edge $(u,v)$ of $L$ is defined to be $L\cup \set{x}$ such that $x\notin L$, $u\prec x\prec v$ in $\oExt{L; (u,v)}$, and $x$ is both meet- and join-irreducible in $\oExt{L; (u,v)}$. In other words, we obtain the diagram of $\oExt{L; (u,v)}$ by inserting a new vertex on the edge from $u$ to $v$ in the diagram of $L$. That is, the old edge $(u,v)$ is replaced by two new edges, $(u,x)$ and $(x,v)$ in $\oExt{L; (u,v)}$. As the diagram determines the lattice, the following folklore lemma, taken from Gr\"atzer \cite[Lemma 290]{GGfound}, is essentially immediate.

\begin{lemma}[Gr\"atzer \cite{GGfound}]\label{lemma:opExt}
 For each edge $(u,v)$ of a finite lattice $L$, $\oExt{L; (u,v)}$ is a lattice and $L$ is a sublattice of it.
Furthermore, $\oExt{L; (u,v)}$ is a dismantlable extension of $L$.
\end{lemma}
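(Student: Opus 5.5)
We need to prove Lemma \ref{lemma:opExt}. Write $K:=\oExt{L;(u,v)}=L\cup\set x$, and order $K$ by keeping the order of $L$ and adding the comparabilities that come through $x$. Precisely, for $y\in L$ we put $y<x$ if and only if $y\leq u$, and $x<y$ if and only if $v\leq y$. The first step is to check that this is a partial order. Reflexivity and antisymmetry are clear, since $u<v$ in $L$ means that $x$ can never be both below and above some $y\in L$. Transitivity reduces to the chains $y\leq u<x$ and $x<v\leq y$ together with transitivity in $L$; the only new composite relation, $y<x<z$, gives $y\leq u<v\leq z$, so $y<z$ already holds in $L$.

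The second step is to show that $K$ is a lattice whose operations extend those of $L$. Since $K$ is finite and has the bottom $0_L$ and the top $1_L$ (as $L$ has more than one element when it has an edge), it is enough to compute the joins. For $y,z\in L$, the set of upper bounds of $\set{y,z}$ in $K$ consists of the upper bounds in $L$, together with $x$ exactly when $y,z\leq u$. In the latter case $y\vee_L z\leq u<x$, so $y\vee_L z$ is still the least upper bound in $K$. Hence $y\vee_K z=y\vee_L z$, and dually $y\wedge_K z=y\wedge_L z$, so $L$ is a sublattice of $K$.

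For $x$ and $y\in L$, the upper bounds of $\set{x,y}$ within $L$ are the $w\in L$ with $w\geq v$ and $w\geq y$, so they have the least element $v\vee_L y$. If $y\leq u$, then $x$ itself is the join. Otherwise $x$ is not an upper bound, and the join is $v\vee_L y$. Meets are handled dually. This is the step that needs care: the covering conditions $u\prec x\prec v$ must hold because $u\prec v$ in $L$, and no $y\in L$ satisfies $u<y<v$. They also make $x$ doubly irreducible, since its only lower cover is $u$ and its only upper cover is $v$. Finally, the edge $(u,v)$ disappears, because $u<x<v$, while every other covering pair of $L$ survives.

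The last assertion is immediate from the definition of a dismantlable extension. Take $t=1$, $S_0=L$ and $S_1=K$: then $S_0$ and $S_1$ are sublattices with $|S_1\setminus S_0|=1$, so $K$ is a dismantlable extension of $L$. I expect no real obstacle here; the only point requiring attention is the case analysis for $x\vee y$ and $x\wedge y$, together with confirming that the joins of old elements do not change.
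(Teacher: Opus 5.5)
Your proof is correct. You define the order on $L\cup\set{x}$ by $y<x\iff y\leq u$ and $x<y\iff v\leq y$, check that joins and meets of old elements are unchanged, note that $x\vee y$ is either $x$ or $v\vee_L y$ (and dually for meets), and take $t=1$ in the definition of a dismantlable extension; this is precisely the routine verification the paper leaves implicit when it cites Gr\"atzer's Lemma 290 and calls the statement essentially immediate because the diagram determines the lattice.
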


\section{Six preparatory lemmas}

Let $e_1=(u_1,v_1)$, \dots, $e_t=(u_t,v_t)$ be an enumeration of the edges of a finite lattice $L$. Denote this enumeration by $\pi$. Let $s=(s_1,\dots,s_t)\in\Nnul^t$ be a $t$-tuple of nonnegative integers. Repeat the one-point extension $s_1+\dots+s_t$ times so that first we insert $s_1$
new elements (in other words, vertices) on the edge $e_1$ in the diagram of $L$, then $s_2$ new elements on the edge $e_2$, \dots, and finally $s_t$ new elements on  the edge $e_t$. Let 
$\MExt L \pi s$ 
 denote the poset that we obtain in this way; we call it the \emph{multi-point extension} of $L$ with parameters $\pi$ and $s$. For $r=(r_1,\dots,r_t)$ and $s=(s_1,\dots,s_t)$ in $\Nnul ^t$, $r\leq s$ means that $r_i\leq s_i$ for all $i\in\set{1,\dots,t}$.

\begin{lemma}\label{lemma:MExt}
$\MExt L \pi s$ defined above is a finite lattice, and it is a dismantlable extension of $L$. Furthermore, if $r,s\in\Nnul^t$ with $r\leq s$, then $\MExt L\pi  s$ is a dismantlable extension of $\MExt L\pi  r$.  
\end{lemma}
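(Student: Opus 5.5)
The plan is induction on the number $N=s_1+\dots+s_t$ of inserted vertices, with Lemma~\ref{lemma:opExt} doing all the real work. There is one subtlety first. After inserting some vertices, the original edge $e_i=(u_i,v_i)$ of $L$ is generally no longer an edge: it has been subdivided if $s_i>0$, though that only affects $e_i$ itself. So we fix the convention implicit in the definition. Inserting $s_i$ new elements \quot{on the edge $e_i$} means that we perform $s_i$ successive one-point extensions. Each one is applied to the edge joining $u_i$ (or the most recently inserted element on $e_i$) to $v_i$. In other words, it is applied to the topmost remaining piece of the subdivided edge $e_i$, which is an edge of the current lattice.

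Next we check that this edge really is an edge of the current lattice. A one-point extension $\oExt{K;(u,v)}$ removes only the covering pair $(u,v)$ and adds $(u,x)$ and $(x,v)$. All other covering pairs of $K$ remain covering pairs, because $x$ is comparable only with elements comparable to both $u$ and $v$ in the appropriate way, so $x$ cannot lie strictly between any other covering pair. Hence, at every stage, the edges $e_j$ with $j$ not yet processed are still edges. The current piece of $e_i$ is also an edge: it is either $e_i$ itself or one of the two new edges created at the previous step.

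With this in place, the first claim follows. Write $L=K_0\subseteq K_1\subseteq\dots\subseteq K_N=\MExt L\pi s$, where each $K_{j}$ is a one-point extension of $K_{j-1}$ at an edge of $K_{j-1}$. By Lemma~\ref{lemma:opExt}, each $K_j$ is a finite lattice and $K_{j-1}$ is a sublattice of it with $|K_j\setminus K_{j-1}|=1$. Sublattice-of-sublattice is a sublattice, so every $K_j$ is a sublattice of $K_N$. This chain is exactly the witness sequence required by the definition of dismantlable extension.

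For the second claim, take $r\le s$. The key observation is that $\MExt L\pi s$ does not depend, up to isomorphism, on the order in which the individual insertions are made. Its diagram is simply the diagram of $L$ with each edge $e_i$ replaced by a chain of length $s_i+1$, and this follows from the edge-preservation remark above. Hence $\MExt L\pi s\cong\MExt{M}{\pi'}{s-r}$, where $M=\MExt L\pi r$ and $\pi'$ enumerates the top pieces of the subdivided edges $e_1,\dots,e_t$ of $M$ (these are edges of $M$). Applying the first part to $M$ gives that $\MExt L\pi s$ is a dismantlable extension of $\MExt L\pi r$.

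The main obstacle is making the identification of isomorphism types rigorous, that is, showing that the diagram determines the lattice and that insertion order is irrelevant. If that is awkward to state cleanly, the fallback is to avoid reordering altogether. Take the chain for $s$ and, at each stage, note that it contains as a sublattice the partial extension for $r$, and argue directly that the elements of $\MExt L\pi s\setminus\MExt L\pi r$ can be added one at a time, each yielding a sublattice. For example, add them in order along each subdivided edge, and check by the same covering-pair argument that each intermediate set is a lattice whose operations agree with those of $\MExt L\pi s$.
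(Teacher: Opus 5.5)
Your proposal is correct and follows the paper's proof. The paper likewise builds the chain $L_0=L\subseteq L_1\subseteq\dots$ by inserting each new vertex on the top piece $(w_{i,j-1},v_i)$ of the subdivided edge and invoking Lemma~\ref{lemma:opExt} at each step, and for $r\leq s$ it simply notes that $\MExt L\pi s$ is reached from $\MExt L\pi r$ by further one-point extensions. Your edge-preservation check and subdivided-diagram description make explicit what the paper treats as clear; since a finite poset is determined by its covering relation, they already settle the \quot{obstacle} you flag, once $\pi'$ is padded to all edges of $M$ with zero entries.
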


\begin{proof}
To add $s_1$ new vertices on the edge $e_1$, we apply the one-point extension $s_1$ times. So let $L_0:=L$ and $w_{1,0}:=u_1$. For $i\in\set{1,\dots, s_1}$, let 
$L_i:=\oExt{L_{i-1};(w_{1, i-1}, v_1)}$, and let $w_{1,i}$ be the new vertex. That is $w_{1,i}$ is the only element of $L_i\setminus L_{i-1}$.  

In $L_{s_1}$, all the $s_1$ new vertices appear on the (original) edge of $L$. We proceed similarly for the second edge of $L$. Namely, let $w_{2,0}:=u_2$. For 
$i\in\set{1,\dots, s_2}$, let 
$L_{s_1+i}:=\oExt{L_{s_1+i-1};(w_{2, i-1}, v_2)}$, and let $w_{2,i}$ be the new vertex.

Then we continue with the edge $e_3$, and so on. Finally, we arrive at $L_{s_1+\dots+s_t}=\MExt L\pi  s$. 
It follows from a repeated use of Lemma \ref{lemma:opExt} that, for all $j\in\set{1,2,\dots, s_1+\dots+s_t}$,  $L_j$ and $\MExt L\pi s$ are lattices and $L_j$ is a \quot{one-step} dismantlable extension of $L_{j-1}$.

The sublattices $L_j$ witness that $\MExt L\pi  s$ is a dismantlable extension of $L$. 
Clearly, if $r \leq s$, then we can reach $\MExt L\pi  s$ from $\MExt L\pi r$ by one-point extensions (in zero steps when $r=s$).
Therefore $\MExt L\pi s$ is a dismantlable extension of $\MExt L\pi  r$, completing the proof of Lemma~\ref{lemma:MExt}.
\end{proof}

\begin{remark}%\label{rem:Extnts} 
The lattices $L$ and $\MExt L\pi s$ do not determine $\pi$ and $s$. For example, if $L=B_4$, the four-element Boolean lattice, then any choice of $\pi$ and every $s\in\{(1,0,0,0)$, $(0,1,0,0)$, $(0,0,1,0)$, $(0,0,0,1)\}$ yield $N_5$, the five-element nonmodular lattice.
\end{remark}

For $k\in\Nplu$ with $k\geq 3$, let $M_k$ denote the $(k+2)$-element lattice that consists of $k$ pairwise incomparable elements $a_1,\dots, a_k$, a bottom element $u$, and a top element $v$. In other words, $M_k$ is the $(k+2)$-element modular lattice of length~$2$.

\begin{lemma}\label{lemma:Mk} For $3\leq k\in\Nplu$, let the $M_k$ described above be a sublattice of a finite lattice $L$ such that $u\prec_L a_i$ for $i\in\set{1,\dots,k}$. That is, the lower $k$ edges of $M_k$ are also edges of $L$, but the upper $k$ edges need not be. Then $\cd L\leq 2^{-k}$.
\end{lemma}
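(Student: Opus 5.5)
The plan is to bound $|\Con L|$ via \eqref{eq:smlBmpl}, using a set $B\subseteq\Jir L$ that is $k-1$ elements smaller than $\Jir L$. Then I will compare this with $2^{|L|-1}=2^{|\Jir L|+|\Jr L|}$, exactly as in the derivation of \eqref{eq:mncgrdlcd}, together with the observation that $|\Jr L|\geq 1$.

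First I will show that all $k$ edges $(u,a_i)$ generate the same congruence. Fix $i\neq j$.
\begin{itemize}
\item If $\alpha$ collapses $u$ and $a_i$, then joining with $a_j$ gives $a_j=u\vee a_j\mathrel{\alpha} a_i\vee a_j=v$.
\item Conversely, if $\alpha$ collapses $a_j$ and $v$, then meeting with $a_i$ gives $u=a_j\wedge a_i\mathrel{\alpha} v\wedge a_i=a_i$.
\end{itemize}
Hence $\con(u,a_i)=\con(a_j,v)$ whenever $i\neq j$. Since $k\geq 3$, for any $i,i'$ there is a third index $j$, so $\con(u,a_i)=\con(a_j,v)=\con(u,a_{i'})$. (The interval $[a_j,v]$ need not be prime in $L$, so I will use this direct argument rather than \eqref{eq:trsnpsdcon}.)

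Next I will transfer this to join-irreducibles. For each $i$, let $x_i$ be a minimal element of $\set{x\in L: x\leq a_i,\ x\not\leq u}$. By minimality, every element strictly below $x_i$ lies below $u$. So $x_i\in\Jir L$, its lower cover is $\locov{x_i}=x_i\wedge u$, and $x_i\vee u=a_i$ because $u\prec a_i$. Thus $[\locov{x_i},x_i]$ and $[u,a_i]$ are transposed, and \eqref{eq:trsnpsdcon} gives $\con(\locov{x_i},x_i)=\con(u,a_i)$. The $x_i$ are pairwise distinct: if $x_i=x_j$ with $i\ne j$, then $x_i\leq a_i\wedge a_j=u$, which is a contradiction. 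Hence $x_1\seql L x_2\seql L\dots\seql L x_k$. Therefore $B:=\Jir L\setminus\set{x_2,\dots,x_k}$ satisfies \eqref{eq:condBJirL}, and \eqref{eq:smlBmpl} yields $|\Con L|\leq 2^{|\Jir L|-k+1}$.

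Finally, $v\in\Jr L$. Indeed, if $v$ had a unique lower cover $w$, then $a_1,a_2\leq w$, so $v=a_1\vee a_2\leq w$, which is absurd. Using $|L|=1+|\Jir L|+|\Jr L|$ and $|\Jr L|\geq1$, I get
\[\cd L\leq 2^{|\Jir L|-k+1}/2^{|\Jir L|+1}=2^{-k}.\]
The only delicate step is choosing the $x_i$ so that they are distinct, join-irreducible, and have their prime intervals transposed to $[u,a_i]$. The minimality choice above handles all three requirements; the rest is bookkeeping.
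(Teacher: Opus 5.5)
Your proof is correct and follows essentially the same route as the paper. It uses the same minimal join-irreducibles below $a_i$ but not below $u$, the same third-index trick (using $k\geq 3$) to show all $\con(u,a_i)$ coincide, and the same count based on $v\in\Jr L$. The only difference is that the paper introduces auxiliary elements $c_i$ with $a_i\leq c_i\prec v$ so it can apply \eqref{eq:trsnpsdcon} to the prime intervals $[c_i,v]$, whereas you verify $\con(u,a_i)=\con(a_j,v)$ directly from the compatibility of congruences with $\vee$ and $\wedge$; this is valid and slightly shorter.
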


\begin{proof}
For $i\in\set{1,\dots,k}$, pick a minimal element $b_i\in\idl{a_i}\setminus \idl u$. 
If $b_i$ had two distinct lower covers $x$ and $y$, then $x$ and $y$ would belong to $\idl u$ by the minimality of $b_i$, and therefore $x\vee y$ would also belong to $\idl u$, contradicting $x\vee y=b_i\notin\idl u$.
Hence $b_i\in\Jir L$. Using $u \prec a_i$ and $\locov {b_i}\prec b_i$ together with $b_i\nleq u$ and $\locov{b_i}\leq u$, it follows that $[\locov{b_i},b_i]$ and $[u,a_i]$ are transposed intervals.
Hence \eqref{eq:trsnpsdcon} and the equality $a_i=b_i\vee u$ yield, respectively, that
\begin{equation}
\con_L(\locov{b_i},b_i)=\con_L(u,a_i)\text{ for }i\in\set{1,\dots,k}\text{, and }|\set{b_1,\dots,b_k}|=k.
\label{eq:ftnddistinct}
\end{equation}
Next, for $i\in\set{1,\dots,k}$, pick an element $c_i\in L$ such that $a_i\leq c_i\prec v$. For distinct $i,j\in\set{1,\dots,k}$, we have $a_j\nleq c_i$, since otherwise $v=a_i\vee a_j\leq c_i\vee c_i=c_i$ would be a contradiction. Hence 
$c_i<a_j\vee c_i\leq v$ and  $c_i\prec v$ yield $a_j\vee c_i=v$. Similarly, $u\leq a_j\wedge c_i<a_j$ and $u\prec a_j$ imply $a_j\wedge c_i=u$. Thus $[u,a_j]$ and $[c_i,v]$ are transposed intervals. Therefore \eqref{eq:trsnpsdcon} gives that 
\begin{equation}
\text{for any two distinct }i,j\in\set{1,\dots,k},
\text{ \ }\con_L(u,a_j)=\con_L(c_i,v).
\label{eq:conuajciv}
\end{equation}
For any two distinct $i,j\in\set{1,\dots,k}$, we can pick a $t\in\set{1,\dots,k}\setminus\set{i,j}$. Applying \eqref{eq:conuajciv} first for $i$ and $t$ and then for $t$ and $j$, we obtain that $\con(u,a_i)=\con(c_t,v)=\con(u,a_j)$. Hence $\con_L(u,a_1)=\dots=\con_L(u,a_k)$. Combining this chain of equalities with \eqref{eq:dkKKseq} and the first half of \eqref{eq:ftnddistinct}, it follows that $b_1\seql L b_2\seql L\dots \seql L b_k$. Hence $B:=\Jir L\setminus \set{b_2,\dots,b_k}$ satisfies \eqref{eq:condBJirL}. Furthermore, by the second half of \eqref{eq:ftnddistinct}, $v\in\Jr L$, and the equality $|\Jir L|=|L|-1-|\Jr L|$, we have that $|\Jir L|\leq |L|-1-1=|L|-2$ and that
\begin{equation*}
|B|=|\Jir L|-(k-1)\leq |L|-2-(k-1)=|L|-1 -k.
\end{equation*} 
This inequality and \eqref{eq:smlBmpl} imply that  $|\Con L|\leq 2^{|B|} = 2^{|L|-1 -k}$. Finally, dividing this inequality by $2^{|L|-1}$, it follows that $\cd L\leq 2^{-k}$, which completes the proof of Lemma \ref{lemma:Mk}.
\end{proof}

For positive integers $c,k_1,\dots,k_c$, let $R(k_1,\dots,k_c)$ be the smallest number $n$ such that, no matter how we color the edges of the $n$-element complete (simple, undirected) graph $K_n$ with $c$ colors $\gamma_1,\dots,\gamma_c$\footnote{Each edge of $K_n$ has exactly one color, and some colors $\gamma_i$ may remain unused.}, there is an $i\in\{1,\dots,c\}$ and a $k_i$-element complete subgraph of $K_n$ all of whose edges have color $\gamma_i$.
By Ramsey's theorem \cite{Ramsey}, 
\begin{equation}
R(k_1,\dots,k_c)\text{ is a positive integer; in particular, it exists.}
\label{eq:Ramsey}
\end{equation}
For an integer $k\geq 3$, we introduce the following notation, with $k-1$ copies of $k$ on the right: 
\begin{equation}
R_{k-1}(k):=R(\underbrace{k,k,\dots,k}_{k-1}).
\label{eq:notaRkkm}
\end{equation}

\begin{lemma}\label{lemma:ramseyMt} 
For any integer $k\geq 3$ and every finite lattice $L$, if an element of $L$ has at least $R_{k-1}(k)$ covers, then 
$\cd L\leq 2^{-k}$.
\end{lemma}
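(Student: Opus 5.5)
Let $u\in L$ have covers $a_1,\dots,a_n$ with $n\geq R_{k-1}(k)$. The plan is to produce either a copy of some $M_k$ with bottom $u$ and lower edges $u\prec a_i$, and then invoke Lemma~\ref{lemma:Mk}, or else enough meet-reducible elements to conclude via \eqref{eq:mncgrdlcd}. The Ramsey coloring will decide which case occurs.

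\textbf{Step 1: the coloring.} For distinct $i,j$ the element $a_i\vee a_j$ lies strictly above $u$. I would color the pair $\{i,j\}$ according to whether $a_i\vee a_j$ is a \quot{shared} join. With only $k-1$ colors the natural choice is to measure how many other atoms lie below the join: let $m(i,j):=|\{t: a_t\leq a_i\vee a_j\}|$, and use colors $\gamma_2,\dots,\gamma_{k-1}$ for $m(i,j)\in\{2,\dots,k-1\}$ together with one color $\gamma_k$ for $m(i,j)\geq k$. This gives $k-1$ colors in total.

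\textbf{Step 2: a homogeneous $k$-set.} By \eqref{eq:Ramsey} and \eqref{eq:notaRkkm} there is a $k$-element set $I$ all of whose pairs have the same color.

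\emph{Case A.} Suppose all joins $a_i\vee a_j$ ($i,j\in I$) coincide, say with $v$. Then $\{u,v\}\cup\{a_i:i\in I\}$ is an $M_k$ sublattice whose lower edges are edges of $L$, because the meets $a_i\wedge a_j=u$ hold automatically for distinct covers. Lemma~\ref{lemma:Mk} then gives $\cd L\leq 2^{-k}$.

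\emph{Case B.} Otherwise I argue for at least $k$ meet-reducible elements. The element $u$ itself is meet-reducible. For each $i\in I$, $a_i$ lies below several distinct joins $a_i\vee a_j$. I would like to show that many elements $a_i$ or their joins are meet-reducible, or that $L$ has $k$ join-reducible elements $a_i\vee a_j$ (these are join-reducible since they have at least two lower covers). Then \eqref{eq:mncgrdlcd}, in its form $\cd L\leq 2^{-|\Jr L|}$, yields the bound.

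\textbf{The main obstacle} is making the coloring produce exactly this dichotomy. The goal is that a homogeneous $k$-set with a \quot{small} color forces at least $k$ pairwise distinct joins, hence $|\Jr L|\geq k$. A pair-coloring alone does not obviously force a common join, so I would refine it. I would first try a triangle-style argument: if $a_i\vee a_j=a_i\vee a_t$ for all $j,t$ in a $k$-set, then all joins equal a single $v$ and Case~A applies. Otherwise the joins $a_i\vee a_j$ are numerous. A homogeneous set in which any two pairs have distinct joins gives $\binom{k}{2}\geq k$ join-reducible elements. The remaining mixed configurations must be handled by the $k-1$ color classes, each encoding that the join \quot{absorbs} a bounded number of other covers. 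Tuning this encoding so that every color class yields either $k$ distinct join-reducibles or an $M_k$ is the step I expect to require care. If it fails, I would fall back on the cruder count: even when many joins coincide, each distinct join value is join-reducible. So either some value is shared by a $k$-clique, which forms an $M_k$, or Ramsey/Turán-type counting gives at least $k$ distinct join values.
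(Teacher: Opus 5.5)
\textbf{Verdict.} Your main line has a genuine gap. Your final fallback sentence, made precise, is exactly the paper's proof and repairs it.

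\textbf{The gap.} Your Case~A is fine. Case~B, however, is never actually argued: you only state what you \quot{would like to show}. With the coloring by $m(i,j)$ it cannot be argued from the monochromatic set $I$ alone. The trouble is the lumped color $\gamma_k$. Since $m(i,j)$ counts all covers of $u$ below $a_i\vee a_j$, including those outside $I$, it says nothing about how the joins inside $I$ coincide.

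For a concrete failure, let $k=5$ and let $E$ be a $3$-element set. Take the lattice of closed sets of the closure system on $\{1,\dots,5\}\cup E$ whose closed sets are $\emptyset$, the singletons, the whole set, and $E$, $\{4\}\cup E$, $\{5\}\cup E$, $\{1,2,3\}\cup E$, $\{4,5\}\cup E$, $\{1,2,3,4\}\cup E$, $\{1,2,3,5\}\cup E$. Put $u=\emptyset$ and $a_i=\{i\}$. Every $a_i\vee a_j$ with $i,j\leq 5$ lies above at least $5$ atoms, so $I=\{1,\dots,5\}$ is monochromatic of color $\gamma_5$. Yet these ten joins take only four values and do not all coincide. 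So neither an $M_5$ nor five distinct joins can be read off from $I$.

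For the exact colors $\gamma_2,\dots,\gamma_{k-1}$ the dichotomy can be rescued. Equal counts force the sets $\{t\in I: a_t\leq a_i\vee a_j\}$ to form a linear space on $I$ with no line equal to $I$. The de Bruijn--Erd\H{o}s theorem then gives at least $k$ lines, i.e., $k$ distinct joins. That is a much heavier tool, though, and it still leaves $\gamma_k$ open.

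\textbf{The repair.} It needs no $m$-coloring at all.
\begin{enumerate}
\item Let $H:=\{a_i\vee a_j: i<j\}$, taken over all $n\geq R_{k-1}(k)$ covers of $u$. Each element of $H$ is the join of two incomparable elements, so $H\subseteq\Jr L$.
\item If $|H|\geq k$, then \eqref{eq:mncgrdlcd} gives $\cd L\leq 2^{-|\Jr L|}\leq 2^{-k}$.
\item If $|H|\leq k-1$, color the pair $\{a_i,a_j\}$ by the element $a_i\vee a_j\in H$ itself. This uses at most $k-1$ colors. By the definition of $R_{k-1}(k)$, there are $k$ covers all of whose pairwise joins equal a single $v\in H$. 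That is your Case~A, and Lemma~\ref{lemma:Mk} finishes.
\end{enumerate}

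Note the direction of the argument. Ramsey's theorem is applied in the branch with \emph{few} join values: using the values themselves as colors is exactly what makes $k-1$ colors suffice. It is not a \quot{Ramsey/Tur\'an counting} that produces $k$ values. The detour through meet-reducible elements (such as $u\in\Mr L$) is also unnecessary.
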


We suspect that the lemma holds under far weaker assumptions---perhaps even with only $k$ covers. It is, however, sufficient for our purposes and easy to prove.

\begin{proof} Let $L$ be as in the lemma, and denote $R_{k-1}(k)$ by $t$. Then $t\in\Nplu$ by \eqref{eq:Ramsey}. Obviously, $k\leq t$. Pick an element $u\in L$ with at least $t$ covers, and let $a_1,\dots, a_t$ be pairwise distinct covers of $u$. Let $H:=\set{a_i\vee a_j: 1\leq i<j\leq t}$; it is a subset of $\Jr L$.  If $|H|\geq k$, then $|\Jr L|\geq k$ and \eqref{eq:mncgrdlcd} yield the required equality $\cd L\leq 2^{-k}$. Hence, we can assume that $|H|<k$. Thus there are pairwise distinct elements $h_1,\dots,h_{k-1}\in L$ such that 
$H\subseteq\set{h_1,\dots,h_{k-1}}$. Let $K_t$ be the $t$-element complete graph with vertex set $\set{a_1,\dots,a_t}$. We color the edges of $K_t$ with $h_1,\dots,h_{k-1}$ as follows: for $1\leq i<j\leq t$, the edge $(a_i,a_j)$ has color $a_i\vee a_j$.  
Note that unless $|H|=k-1$, some colors are unused. By the definition of the Ramsey number $t=R_{k-1}(k)$, there is an $s\in\set{1,\dots,k-1}$ and a $k$-element \quot{monochromatic} complete subgraph $\set{a_{p_1},\dots, a_{p_k}}$ such that for all $1\leq i<j\leq k$, the edge $(a_{p_i},a_{p_j})$ has color $h_s$. That is, $a_{p_i}\vee a_{p_j}=h_s$.
Therefore, with $(u,h_s, a_{p_1},\dots, a_{p_k})$ in place of $(u,v,a_1,\dots,a_k)$,  Lemma \ref{lemma:Mk} applies, and we conclude that $\cd L\leq 2^{-k}$. The proof of Lemma \ref{lemma:ramseyMt} is complete.
\end{proof}

For $k\in\Nplu$ and $k$-tuples $a=(a_1,\dots, a_k)$ and $b=(b_1,\dots, b_k)$, let $a\leq b$ denote that $a_i\leq b_i$ for all $i\in\set{1,\dots,k}$.

\begin{lemma}\label{lemma:woQo} Given an integer $k\in \Nplu$, let $\upi s 1$, 
$\upi s 2$, $\upi s 3$, \dots be an infinite sequence of $k$-tuples of nonnegative integers.  Then there are $i,j\in\Nplu$ such that $i<j$ and $\upi s i\leq \upi s j$. 
\end{lemma}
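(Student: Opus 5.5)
This is Dickson's lemma: the componentwise order on $\Nnul^k$ is a well-quasi-order. I would prove it by induction on $k$, establishing along the way the stronger statement that every infinite sequence of $k$-tuples of nonnegative integers has an infinite subsequence $\upi s {i_1}, \upi s {i_2}, \dots$ with $i_1<i_2<\dots$ that is nondecreasing in every coordinate. The lemma follows at once from this stronger statement by taking $i:=i_1$ and $j:=i_2$.

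\emph{Base case $k=1$.} Let $m$ be the least value occurring in the sequence, and let $i_1$ be the first index where it occurs. Now repeat the argument on the tail after $i_1$: take its minimum value, which is at least $m$ because $m$ is the global minimum, and its first occurrence $i_2>i_1$. Continuing in this way produces a nondecreasing subsequence.

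\emph{Inductive step.} Assume the stronger claim for $k-1$, and let a sequence of $k$-tuples be given.
\begin{enumerate}
\item Apply the induction hypothesis to the first $k-1$ coordinates. This yields an infinite subsequence that is nondecreasing in those coordinates.
\item Apply the case $k=1$ to the $k$th coordinates of this subsequence. This yields a further infinite subsequence.
\end{enumerate}
Passing to a subsequence preserves monotonicity in the first $k-1$ coordinates, so the final subsequence is nondecreasing in all $k$ coordinates.

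There is no serious obstacle here. The one point that needs care is to carry the stronger infinite-subsequence statement through the induction; a single comparable pair would not be enough to iterate over the coordinates.
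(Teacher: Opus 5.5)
Your proof is correct, but it takes a different route from the paper's. You strengthen the statement to the existence of an infinite subsequence $\upi s {i_1},\upi s {i_2},\dots$ that is nondecreasing in every coordinate. You then handle the coordinates one at a time by successive extraction, and the base case needs only that every nonempty subset of $\Nnul$ has a least element.

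The paper instead keeps the weak statement (a single pair $i<j$ with $\upi s i\leq \upi s j$) as its induction hypothesis and argues by contradiction:
\begin{enumerate}
\item In a ``bad'' sequence no tuple can repeat, so some coordinate, say the $k$th, takes infinitely many values.
\item Pass to a subsequence on which that coordinate is strictly increasing.
\item The truncated $(k-1)$-tuples then again form a bad sequence, contradicting the case $k-1$.
\end{enumerate}

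Your version is direct and avoids the case distinction on whether some coordinate has infinite range. It also proves more: $\upi s {i_p}\leq \upi s {i_q}$ whenever $p<q$, which is Dickson's lemma in its strong form. The paper's version, on the other hand, shows that your closing remark is too strong. A single comparable pair \emph{can} be carried through the induction, provided you argue via bad sequences and use a strictly increasing coordinate to transfer badness to the truncated tuples. What fails is only the naive coordinate-by-coordinate iteration.
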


\begin{proof} We prove the lemma by induction on $k$. The statement is trivial for $k=1$. So assume that $k\geq 2$ and that the lemma holds for all smaller values. For the sake of contradiction, we assume that there is an infinite sequence 
\begin{equation}
\upi s 1=(\upi s 1_1,\dots,\upi s 1_k),\  \upi s 2=(\upi s 2_1,\dots,\upi s 2_k), \  \upi s 3=(\upi s 3_1,\dots,\upi s 3_k),\  \dots
\label{eq:ssrznClb}
\end{equation}
of nonnegative integers such that 
\begin{equation}
\upi s i\nleq \upi s j\text{ for all }i,j\text{ with }i<j.
\label{eq:thfltNclB}
\end{equation}

For $j\in\set{1,\dots,k}$, define $T_j:=\set{\upi s i_j: i\in\Nplu}$. We claim that at least one out of $T_1$, \dots, $T_k$ is infinite. Suppose the contrary. Then, for all $i\in\Nplu$,  the $k$-tuple  $\upi s i$ belongs to the finite
set $T_1\times T_2\times\dots \times T_k$. Hence there are $i,j\in\Nplu$ such that $i<j$ and $\upi s i=\upi s j$, contradicting \eqref{eq:thfltNclB}. Therefore, $T_j$ is infinite for some $j\in\set{1,\dots, k}$. By symmetry, we may assume that this $j$ is $k$, that is, $T_k$ is infinite.
To define a subsequence of the sequence in \eqref{eq:ssrznClb}, let $m_1:=1$. Since $T_k$ is infinite, there is a smallest $m_2\in\Nplu$ such that $m_1<m_2$ and $\upi s{m_1}_k<\upi s{m_2}_k$. Again, since $T_k$ is infinite, there is a smallest $m_3\in\Nplu$ such that $m_2<m_3$ and $\upi s{m_2}_k<\upi s{m_3}_k$. And so on; if $m_j$ is already defined, then $m_{j+1}\in\Nplu$ is the smallest integer such that $m_j<m_{j+1}$ and $\upi s {m_j}_k < \upi s {m_{j+1}}_k$. 
To ease the notation, we may replace the original sequence by the subsequence $\upi s{m_1}$, $\upi s{m_2}$, $\upi s{m_3}$, \dots; \eqref{eq:thfltNclB} remains valid. 
Thus, due to this replacement, we may assume that in addition to \eqref{eq:thfltNclB}, the sequence $\upi s 1$, $\upi s 2$, $\upi s 3$, \dots {} has the additional property
\begin{equation}
\upi s 1_k<\upi s 2_k < \upi s 3_k < \upi s 4_k<\dots
\label{eq:pdDtnlpRt}
\end{equation} 
By letting $\vupi s i:=(\upi s i_1,\dots, \upi s i_{k-1})$, we obtain a  sequence $\vupi s 1$, $\vupi s 2$, $\vupi s 3$, \dots{} of $(k-1)$-tuples.  Combining \eqref{eq:thfltNclB} with \eqref{eq:pdDtnlpRt}, it follows that this sequence of  $(k-1)$-tuples also satisfies \eqref{eq:thfltNclB}. This contradicts the induction hypothesis and completes the proof of Lemma \ref{lemma:woQo}.
\end{proof}

Note that the proof above was motivated by the well-known fact that the direct power $(\Nnul^k;\leq)$
has neither an infinite strictly decreasing sequence nor an infinite antichain; an \emph{antichain} is a poset consisting of pairwise incomparable elements. 

A chain $H=(H;\leq)$ is a \emph{well-ordered set} if each of its nonempty subsets has a (uniquely defined) smallest element. For a subset $H$ of $\RR$, we say that $H$ is a \emph{well-ordered subset} of $\RR$ if $(H;\leq)$, where \quot{$\leq$} is the restriction of the usual ordering $\leq_\RR$ of the real numbers, is a well-ordered set.
The following easy lemma belongs to the folklore; its 
parts\footnote{We found two examples; see  
\href{https://people.math.harvard.edu/\string~elkies/Misc/sol6.html}{https://people.math.harvard.edu/\string~elkies/Misc/sol6.html} and 
\href{
https://math.stackexchange.com/questions/1983425/there-is-no-well-ordered-uncountable-set-of-real-numbers}{https://math.stackexchange.com/questions/1983425/there-is-no-well-ordered-uncountable-set-of-real-numbers.}} are at exercise level; we include a short proof for completeness.

\begin{lemma}\label{lemma:wLlordH} 
Assume that $H\neq\emptyset$ is a well-ordered subset of $\RR$. Then $|H|\leq |\Cls H|\leq\aleph_0$, and $\Cls H$ is also a well-ordered set. Furthermore, every element of $\Acc H$ is a left accumulation point but not a right accumulation point of $H$.
\end{lemma}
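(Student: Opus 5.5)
Countability of $H$ comes first, using the density of $\QQ$. For each $h\in H$ other than the greatest element of $H$ (if there is one), well-orderedness gives an immediate successor $h^+:=\min\set{x\in H: x>h}$. We then pick a rational $q_h\in(h,h^+)_\RR$. The intervals $(h,h^+)_\RR$ are pairwise disjoint for distinct $h$, since no element of $H$ lies strictly between $h$ and $h^+$. Hence $h\mapsto q_h$ is injective and $|H|\leq\aleph_0$. The inequality $|H|\leq|\Cls H|$ is trivial.

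Next comes the structure of $\Acc H$. Let $r\in\Acc H$. We first show that $r$ is not a right accumulation point. If it were, then $\set{x\in H: x>r}$ would be nonempty with least element $m>r$, and $H\cap(r,m)_\RR=\emptyset$, a contradiction. Since $r$ is an accumulation point that is not a right one, for all small $\epsilon$ the set $H\setminus\set r$ meets $(r-\epsilon,r)_\RR$. Thus $r$ is a left accumulation point. Consequently, for each $r\in\Acc H$ there is a strictly increasing sequence in $H$ converging to $r$; we choose points one at a time, each time taking an $\epsilon$ smaller than the gap to the previously chosen point.

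For well-orderedness of $\Cls H$, suppose $\Cls H$ contained a strictly decreasing sequence $c_1>c_2>\cdots$. Each $c_n$ is either in $H$ or is a left limit of $H$. So for each $n$ we can pick $h_n\in H$ with $c_{n+1}<h_n\leq c_n$: take $h_n=c_n$ if $c_n\in H$, and otherwise use the left approximation from the previous step with $\epsilon=c_n-c_{n+1}$. Then $h_1>h_2>\cdots$ is strictly decreasing in $H$, which is impossible, since a strictly decreasing sequence would have no least element. A chain with no infinite strictly decreasing sequence is well-ordered, using dependent choice as usual. Hence $\Cls H$ is well-ordered.

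Finally, $|\Cls H|\leq\aleph_0$ now follows by applying the first paragraph to the well-ordered set $\Cls H$. This requires $\Cls H\subseteq\RR$ and $\Cls H\neq\emptyset$, both of which hold. The only step needing care is choosing the $h_n$ in the third paragraph when $c_n$ is a pure accumulation point. This is precisely where we use that every accumulation point is a left accumulation point, so the second paragraph must be proved before the third.
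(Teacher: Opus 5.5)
Your proof is correct. Two of its three ingredients match the paper's. The paper also excludes right accumulation points by taking the least element of $H$ above $r$ (its $\nxt r$). It also gets countability by injecting into $\QQ$ through a rational chosen in the gap $(x,\pnxt x)$. The paper does this only once, directly for $\Cls H$ after well-orderedness is established, whereas you prove it for an arbitrary well-ordered subset and apply it twice.

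The genuine difference is in showing that $\Cls H$ is well-ordered. The paper takes a nonempty $X\subseteq\Cls H$ without a least element and notes that $X$ is bounded below by $\min H$. It then observes that $u:=\inf X\notin X$ would be a right accumulation point of $H$: each interval $(u,u+1/n)_\RR$ contains a point of $X$, hence a point of $H$. This contradicts the first step. You instead interleave points of $H$ into a strictly decreasing sequence in $\Cls H$. That route needs the \emph{left} approximation property of accumulation points, which forces your ordering of the steps. It also needs the descending-sequence characterization of well-orders, which uses dependent choice.

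The paper's route is shorter and uses only the ``no right accumulation point'' fact. Yours has the side benefit of proving explicitly that every accumulation point is a left accumulation point. The paper's proof leaves that half of the last assertion implicit; it follows from the definition once right accumulation is excluded.
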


\begin{proof} For $r\in \RR$, let $\nxt r\in H$ be the smallest element $h\in H$ such that $r<h$; if there is no such element, then $\nxt r:=\infty$. 
%(The acronym in $\nxt  r$ comes from \emph{next}.)% 
For any $r\in\RR$,  $(r,\nxt r)_\RR\cap H=\emptyset$, whence $r$ is not a right accumulation point. Therefore $H$ has no right accumulation point.

Denoting the smallest element of $H$ by $h_0$, observe that $h_0\leq x$ for every $x\in\Acc H$. 
To show that $\Cls H=H\cup\Acc H$ is also well-ordered, assume that $\emptyset\neq X\subseteq \Cls H$. For the sake of contradiction, suppose that $X$ has no smallest element. 
Let $u\in \RR$ denote the infimum of $X$; since $h_0\leq x$ holds for all $x\in X$, this infimum exists. As $X$ has no smallest element, $u\notin X$. For each $n\in\Nplu$, pick an element $p_n\in(u,u+1/n)_\RR\cap X$, and define $h_n\in H$ as follows. If $p_n\in H$, then $h_n:=p_n$. 
Otherwise $p_n\in \Acc H$, and we pick $h_n\in H$ from $(u,u+1/n)_\RR$, which is a neighborhood of $p_n$. Since $h_n\in(u,u+1/n)_\RR$ in both cases, $u$ is a right accumulation point of $H$, but this has been excluded. Therefore, $\Cls H$ is a well-ordered subset of $\RR$.

For each $r\in\RR$, we define $\pnxt r\in\Cls H$ as the smallest element $c\in\Cls H$ with $r<c$; we let $\pnxt r=\infty$ if no such $c$ exists.
For each $x\in \Cls H$, pick a rational number $q_x\in (x,\pnxt x)$. If $x,y\in\Cls H$ with $x<y$, then $q_x<\pnxt x\leq y<q_y$, whence the function $x\mapsto q_x$ from $\Cls H$ to $\QQ$ is injective. Thus $|H|\leq|\Cls H|\leq |\QQ|= \aleph_0$, completing the proof of Lemma \ref{lemma:wLlordH}.
\end{proof}

\section{Skeleton sublattice}
For an element $w$ of a finite lattice $L$, let the following
\begin{align*}
\LoCov w&=\pLoCov L w:=\set{x \in L: x\prec w}\text{ and}
\cr
\UpCov w&=\pUpCov L w:=\set{x \in L: w\prec x}
\end{align*}
denote the \emph{set of lower covers} and the \emph{set of (upper) covers} of $w$, respectively.
The \emph{number $|\LoCov w|$ of lower covers} and the \emph{number $|\UpCov w|$ of (upper) covers} of $w$ are denoted by $\nlc w$ and $\nuc w$, respectively. We may also write $\nlcp L w$ and $\nucp L w$ to indicate the lattice $L$.

The \emph{skeleton sublattice} $\Skel L$ of a finite lattice $L$ is defined by
\begin{equation}
\Skel L:=\Jr L\cup\Mr L\cup \bigcup_{x\in\Jr L}\LoCov x\cup \bigcup_{x\in\Mr L}\UpCov x .
\label{eq:szlSklSubL}
\end{equation}
This terminology is explained by the fact that 
\begin{equation}
\Skel L\text{ is a sublattice of }L.
\label{eq:SkelLsublat}
\end{equation}
Indeed, if $x,y\in \Skel L$ are comparable, then $x\vee y$ and $x\wedge y$ both belong to $\{x,y\}\subseteq \Skel L$. If $x,y\in \Skel L$ are incomparable, then $x\wedge y\in\Mr L\subseteq \Skel L$ and $x\vee y\in\Jr L\subseteq \Skel L$. Hence \eqref{eq:SkelLsublat} holds.

We define the \emph{reducibility number} of a finite lattice $L$ by
\begin{align}
\rno L:=\sum_{x\in\Jr L}(\nlc x+1) + \sum_{x\in\Mr L}(\nuc x +1).
\label{eq:SumRSdef}
%\label{eq:mRlmrnKbdf} 
\end{align}

The following lemma is a straightforward consequence of \eqref{eq:szlSklSubL} and \eqref{eq:SumRSdef}.

\begin{lemma}\label{lemma:ineqLSumRS} For every finite lattice $L$, we have $|\Skel L|\leq \rno L$.
\end{lemma}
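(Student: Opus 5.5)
The plan is to bound $|\Skel L|$ by the size of the union in \eqref{eq:szlSklSubL}, counting each of the four constituents separately and then regrouping the count element by element. By the subadditivity of cardinality under unions,
\[
|\Skel L|\leq |\Jr L|+|\Mr L|+\sum_{x\in\Jr L}|\LoCov x|+\sum_{x\in\Mr L}|\UpCov x|.
\]

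The first two terms can be written as sums of ones: $|\Jr L|=\sum_{x\in\Jr L}1$ and $|\Mr L|=\sum_{x\in\Mr L}1$. The remaining two terms are, by definition, $\sum_{x\in\Jr L}\nlc x$ and $\sum_{x\in\Mr L}\nuc x$. Combining the terms indexed by $\Jr L$ gives $\sum_{x\in\Jr L}(\nlc x+1)$, and doing the same for $\Mr L$ gives $\sum_{x\in\Mr L}(\nuc x+1)$. Their total is exactly $\rno L$ by \eqref{eq:SumRSdef}, which yields $|\Skel L|\leq\rno L$.

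No step is a real obstacle. The only point to note is that the inequality may be strict, because the union can overlap: an element may lie in both $\Jr L$ and $\Mr L$, or be a lower cover of several join-reducible elements. This overlap only helps the upper bound, so no further argument is needed.
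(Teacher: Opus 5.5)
Your proof is correct and follows the same route as the paper, which states the lemma as a straightforward consequence of \eqref{eq:szlSklSubL} and \eqref{eq:SumRSdef} and omits the proof. Bounding the size of the union by the sum of the sizes of its pieces, then regrouping the terms indexed by $\Jr L$ and by $\Mr L$, is exactly that argument.
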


For a finite lattice $L$ with $1\in\Jr L$ and $0\in\Mr L$, 
we have $0,1\in\Skel L$. Therefore, for each $x\in L$, the meet and the join in the following definition are nonempty; $\bigwedge X$ is the same as $\bigwedge_{u\in X}u$, and dually. We define
\begin{equation*}
\upe x:=\bigwedge \bigl(\Skel L\cap\fila L x \bigr)
\text{ and }\dne x:=\bigvee \bigl(\Skel L\cap\idla L x \bigr).
\end{equation*}
Clearly, $\upe x$ is the smallest element in $\Skel L\cap \fila L x$, and dually. Furthermore,
\begin{align}
\upe x, \dne x\in\Skel L,\text{ }\dne x\leq x\leq \upe x
\text{, and }x\in\Skel L\iff \dne x=\upe x.
\label{eq:rSzjbGnxcj}
\end{align}

For a lattice $L$, let $\Edge L:=\set{(u,v)\in L: u\prec_L v}$ denote the \emph{edge set} of $L$.

\begin{lemma}\label{lemma:svznfmsT} 
If $L$ is a finite lattice with $0\in\Mr L$ and $1\in\Jr L$, then 
\begin{equation}
L=\bigcup_{(u,v)\in\Edge{\Skel L}} [u,v]_L 
\label{eq:lLlflTnS}
\end{equation}
and for any edge $(u,v)\in\Edge{\Skel L}$, the interval $[u,v]_L$ is a chain.
\end{lemma}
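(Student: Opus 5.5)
The plan is to show that every element $x\in L\setminus\Skel L$ has exactly one lower cover and exactly one upper cover, so that $x$ lies on a "thread" of doubly irreducible elements running between two elements of $\Skel L$. Assume $0\in\Mr L$ and $1\in\Jr L$, so $0,1\in\Skel L$ and $\upe x$, $\dne x$ are defined by \eqref{eq:rSzjbGnxcj}.

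\emph{Step 1.} Take $x\notin\Skel L$. Then $x\notin\Jr L\cup\Mr L$, and $x\neq 0,1$ because $0,1\in\Skel L$. Hence $x$ has exactly one lower cover $\locov x$ and exactly one upper cover $x^\ast$.

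\emph{Step 2.} Starting from $x$, walk upward through unique upper covers, $x\prec x^\ast\prec x^{\ast\ast}\prec\dots$, for as long as the current element is outside $\Skel L$. By finiteness the walk stops at some $v\in\Skel L$. Walk downward symmetrically to reach some $u\in\Skel L$. Call the resulting chain $C=\{u=c_0\prec c_1\prec\dots\prec c_m=v\}$; its inner elements all lie outside $\Skel L$.

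\emph{Step 3.} We claim that $[u,v]_L=C$. Each inner $c_i$ has a unique lower cover and a unique upper cover. Take $y\in[u,v]$ with $y\neq u$. Then $y$ lies above some atom of $[u,v]$, which is an upper cover of $u$. There are two cases.
\begin{itemize}
\item If $u$ has only one upper cover, that cover is $c_1$.
\item If $u\in\Mr L$, we must still show that the only upper cover of $u$ below $v$ is $c_1$.
\end{itemize}
The cleaner route is to argue by induction along the chain: if $y\geq c_i$ with $y\neq c_i$, then $y\geq$ some upper cover of $c_i$, and that cover is $c_{i+1}$ when $i\geq 1$. So the real issue is at $u$. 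Suppose $u\prec w\leq v$ with $w\neq c_1$. Then $c_1\vee w\leq v$ and $c_1\vee w$ is join-reducible or equals $v$. Going down from $v$, we have $c_{m-1}\prec v$, and everything below $v$ that is comparable with the chain must pass through $c_{m-1}$. Indeed, $c_1\vee w$ is an element of $[c_1,v]$, and we show inductively that $[c_1,v]=\{c_1,\dots,c_m\}$ using unique upper covers. Hence $c_1\vee w=c_j$ for some $j$. If $j<m$, then $c_j$ is not in $\Jr L$, yet it is the join of two incomparable elements (when $w\not\leq c_1$), which is a contradiction unless $w\leq c_j$ lies on the chain. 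Following unique lower covers down from $c_j$ to $c_1$, with $w\leq c_j$ and $w\neq c_j$, shows $w\leq c_{j-1}\leq\dots\leq c_1$, so $w\in\{u,c_1\}$, contradiction. If $j=m$, then $v=c_1\vee w$ is join-reducible, so $c_{m-1}\in\LoCov v$ lies in $\Skel L$, contradicting that inner elements are outside $\Skel L$ (when $m\geq 2$). When $m=1$ the interval is just the edge $\{u,v\}$.

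\emph{Step 4.} Conclude. Step 3 shows $C=[u,v]_L$ is a chain containing $x$, and $u\prec v$ in $\Skel L$ because no inner element lies in $\Skel L$. Hence $(u,v)\in\Edge{\Skel L}$. Conversely, for any edge $(u,v)$ of $\Skel L$ pick an upper cover $c_1$ of $u$ in $[u,v]_L$. Either $c_1=v$, or $c_1\notin\Skel L$ and the argument above shows $[u,v]_L$ is a chain. This yields \eqref{eq:lLlflTnS} together with the chain property.

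The main obstacle is Step 3: ruling out a second branch leaving $u$ (or arriving at $v$) inside $[u,v]$. That requires using the definition of $\Skel L$ — the lower covers of join-reducible elements and the upper covers of meet-reducible elements both belong to $\Skel L$ — at exactly the right spot.
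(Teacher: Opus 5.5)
Your proof is correct, but it is organized differently from the paper's.

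\textbf{How the paper argues.} It does not walk along unique covers. For $x\notin\Skel L$ it works directly with $u:=\dne x$ and $v:=\upe x$.
\begin{itemize}
\item Suppose some $w\in\Skel L$ satisfied $u<w<v$. If $x\leq w$, then $\upe x\leq w<v$; if $w\leq x$, then $w\leq\dne x=u$. Hence $w\parallel x$.
\item So $v=x\vee w\in\Jr L$, and a lower cover $y\geq x$ of $v$ lies in $\Skel L$. This gives $v=\upe x\leq y\prec v$, a contradiction.
\item The chain property is proved separately and uniformly for every edge $(u,v)$ of $\Skel L$. Two incomparable $x,y\in[u,v]_L$ would force $x\vee y=v\in\Jr L$. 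Then a lower cover of $v$ above $x$ would be a skeleton element strictly between $u$ and $v$.
\end{itemize}
Both proofs rest on the fact you use in your cases $j<m$ and $j=m$: a join of two incomparable elements is join-reducible, and lower covers of join-reducible elements lie in $\Skel L$.

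\textbf{Trade-off.} Your construction exhibits $[u,v]_L$ explicitly as a chain of doubly irreducible elements, which is what Lemma~\ref{lemma:skelCoordn} uses. The price is the auxiliary induction $[c_1,v]_L=\set{c_1,\dots,c_m}$ and a separate reduction of an arbitrary skeleton edge to your construction. The paper's version avoids both.

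\textbf{Two points to tighten.}
\begin{itemize}
\item In Step~4, say why the upward walk from $c_1$ ends exactly at $v$. The unique upper cover of a non-skeleton element below $v$ is again $\leq v$. So the first skeleton element reached lies in $[u,v]_L\setminus\set{u}$, and it equals $v$ because $u\prec_{\Skel L}v$.
\item Step~3 can be shortened. For $j<m$, the element $c_j=c_1\vee w$ with $c_1\parallel w$ is join-reducible, hence in $\Skel L$. That already contradicts $c_j$ being an inner element, so the remarks about elements passing through $c_{m-1}$ and the descent along lower covers are unnecessary.
\end{itemize}
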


\begin{proof} Denote the right-hand side of \eqref{eq:lLlflTnS} by $T$. For brevity, let $S$ stand for $\Skel L$.  Clearly, $L\supseteq T$ and $S\subseteq T$. So to verify \eqref{eq:lLlflTnS}, it suffices to show that for every $x\in L\setminus S$, we have $x\in T$. For an element $x\in L\setminus S$, let $u:=\dne x$ and $v:=\upe x$. Then, by \eqref{eq:rSzjbGnxcj} and $x\notin S$, we have $u<x<v$. So, to show that $x\in T$, it suffices to show that $u\prec_S v$. Suppose the contrary, and pick an element $w\in S$ such that 
$u<w< v$. 
If we had $x\leq  w$, then $v=\upe x\leq w$ would contradict $w < v$. 
Similarly, $w\leq x$ would
yield $w\leq \dne x =u< w$, a contradiction again. Thus $w\parallel x$ (notation for incomparability), and $x< w\vee x\in\Jr L\subseteq S$. 
Hence $v=\upe x\leq w\vee x$. But $w\vee x\leq v$, whence $v=w\vee x\in \Jr L$, and thereby $\pLoCov L v\subseteq S$.
Since $x<v$, there exists a $y\in\pLoCov L v$ such that $x\leq y$. Using $y\in S$, we obtain that  $v=\upe x\leq y\prec_L v$. This is a contradiction, proving $u\prec_S v$ and  \eqref{eq:lLlflTnS}.

Next, aiming at a contradiction, suppose that $u\prec_S v$ but $[u,v]_L$ is not a chain. Pick $x,y\in [u,v]_L$ such that $x\parallel y$. 
Then $u<x\vee y\leq v$, $x\vee y\in \Jr L\subseteq S$, and $u\prec_S v$ imply $v=x\vee y\in \Jr L$. Hence $\pLoCov L v\subseteq S$. 
Thus $x<v$ yields an element $z$ such that $x\leq z\in\pLoCov L v\subseteq S$. 
Therefore $u<x\leq z< v$ and $z\in S$ contradict $u\prec_S v$. Thus $[u,v]_L$ is a chain, completing the proof of Lemma \ref{lemma:svznfmsT}.
\end{proof}

\begin{lemma}\label{lemma:trunC}
For a finite lattice $L$, let $u$ and $v$ be the smallest and the largest element of $\Skel L$, respectively. Let $K$ denote the interval $[u,v]_L$. Then the lattice $K$ has the following properties: $0_K\in\Mr K$, $1_K\in\Jr K$, and $\cd K=\cd L$. 
\end{lemma}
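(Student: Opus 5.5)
The plan is to show that $L$ decomposes as a glued sum $\idla L u\gsum K\gsum \fila L v$ in which both outer summands are chains. Then \eqref{eq:chngsum}, applied twice, gives $\cd L=\cd K$, and it remains to check the two reducibility claims. Throughout, I assume $L$ is not a chain. If $L$ is a chain, then $\Jr L=\Mr L=\emptyset$, so $\Skel L=\emptyset$ and $u,v$ are undefined. If $L$ is not a chain, then any incomparable $x,y$ give $x\wedge y\in\Mr L\subseteq\Skel L$, so $\Skel L\neq\emptyset$. Since $\Skel L$ is a sublattice by \eqref{eq:SkelLsublat}, it has a least element $u$ and a greatest element $v$, and $u\leq v$.

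First I show that every $x\in L$ is comparable with $u$. Otherwise $x\wedge u\in\Mr L\subseteq\Skel L$ and $x\wedge u<u$, which contradicts the minimality of $u$. Dually, every $x$ is comparable with $v$. Hence each element of $L$ lies in $\idla L u$, in $[u,v]_L=K$, or in $\fila L v$.

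Next, $\idla L u$ is a chain. If $a,b\leq u$ were incomparable, then $a\wedge b\in\Mr L\subseteq \Skel L$ would lie strictly below $u$, a contradiction. Dually, $\fila L v$ is a chain. Since $u$ and $v$ are comparable with everything, $L=\idla L u\gsum K\gsum\fila L v$. Then \eqref{eq:cdgsum}, or \eqref{eq:chngsum} used twice, together with $\cd C=1$ for chains, yields $\cd K=\cd L$.

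The step needing the most care is $u\in\Mr K$. I would first prove $u\in\Mr L$ by running through the four parts of the definition \eqref{eq:szlSklSubL} of $\Skel L$:
\begin{enumerate}
\item If $u\in\Jr L$, then its lower covers belong to $\Skel L$ and lie below $u$, which is impossible.
\item If $u\in\UpCov w$ for some $w\in\Mr L$, then $w\in\Skel L$ and $w<u$, which is impossible.
\item If $u\in\LoCov w$ for some $w\in\Jr L$, then $w$ has another lower cover $z\in\Skel L$ with $z\parallel u$. So $u\wedge z\in\Mr L\subseteq\Skel L$ and $u\wedge z<u$, which is impossible.
\end{enumerate}
The only remaining possibility is $u\in\Mr L$.

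The upper covers of $u$ in $L$ belong to $\Skel L$, so they are at most $v$ and hence lie in $K$. They remain covers of $u$ in the interval $K$, so $u=0_K\in\Mr K$. The dual argument gives $v=1_K\in\Jr K$.
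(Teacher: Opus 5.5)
Your proof is correct and follows essentially the same route as the paper. You show that $\idla L u$ and $\fila L v$ are chains of elements comparable with everything, which gives $\cd K=\cd L$ through the glued-sum formula; you then exclude the other three parts of \eqref{eq:szlSklSubL} to get $u\in\Mr L$, and hence $0_K\in\Mr K$ because the upper covers of $u$ lie in $\Skel L$ and so below $v$. The differences are only cosmetic: you rule out $u\in\UpCov w$ and $u\in\LoCov w$ directly from the minimality of $u$ rather than via $u\in\Nar L$, and you add the remark that $\Skel L\neq\emptyset$ requires $L$ not to be a chain.
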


\begin{proof} 
If we had elements $x\in \idla L u$ and $y\in L$ such that $x\parallel y$, then $u>x\wedge y\in\Mr L\subseteq \Skel L$ would contradict the fact that $u$ is the smallest element of $\Skel L$. Thus $\idla L u\subseteq \Nar L$ and $\idla L u$ is a chain.  By duality, $\fila L v\subseteq \Nar L$ and $\fila L v$ is also a chain. Collapsing the edges of these two chains, one by one, we obtain $K=[u,v]_L$. It follows from \eqref{eq:cdColLab} that $\cd K=\cd L$. 

Since $u\in\idla L u\subseteq \Nar L$, $u\notin \pLoCov L x$ for $x\in \Jr L$ and $u\notin \pUpCov L y$ for $y\in\Mr L$.
Furthermore, $u\notin \Jr L$, since otherwise $\pLoCov L u\subseteq \Skel L$ would contradict the definition of $u$. 
Therefore \eqref{eq:szlSklSubL} yields that $u\in\Mr L$.
For any $p\in\pUpCov L u$, the inclusion $\pUpCov L u\subseteq\Skel L$ implies $p\leq v$, whence $p\in[u,v]_L=K$. Thus $\pUpCov L u\subseteq \pUpCov K u$, implying that $0_K=u\in\Mr K$. 
Dually, $1_K\in\Jr K$, completing the proof of Lemma \ref{lemma:trunC}.
\end{proof}

The following lemma follows directly from Lemma \ref{lemma:svznfmsT}, so we omit its proof.

\begin{lemma}\label{lemma:skelCoordn} 
For a finite lattice $L$ with $0\in\Mr L$ and $1\in\Jr L$, let $S:=\Skel L$.  Then there exists an enumeration $\pi$ of $\Edge S$ and an $|\Edge S|$-tuple $s$ of nonnegative integers such that $L=\MExt S\pi s$.
\end{lemma}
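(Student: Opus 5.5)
By Lemma~\ref{lemma:svznfmsT}, $L$ is the union of the intervals $[u,v]_L$ over the edges $(u,v)$ of $S$, and each such interval is a chain. The plan is to enumerate $\Edge S$ arbitrarily as $e_1=(u_1,v_1),\dots,e_t=(u_t,v_t)$, call this enumeration $\pi$, and put $s_i:=|[u_i,v_i]_L|-2$. We then show that the lattice $\MExt S\pi s$ is isomorphic to $L$. The natural identification sends the $s_i$ vertices inserted on $e_i$, in increasing order, to the interior elements of the chain $[u_i,v_i]_L$, and fixes $S$ pointwise.

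\textbf{Step 1: the interiors are disjoint.} First we check that the interiors $(u_i,v_i)_L$ are pairwise disjoint and disjoint from $S$. An interior element $x$ of $[u_i,v_i]_L$ cannot lie in $S$, since $u_i\prec_S v_i$. If $x$ were interior to two distinct $S$-edges, then $\dne x$ would equal both lower endpoints. Indeed, $u_i\le\dne x$; if $u_i<\dne x$, then $\dne x\in S$ would satisfy $u_i<\dne x\le x<v_i$, contradicting $u_i\prec_S v_i$. By the same argument $v_i=\upe x$. So the edge is determined by $x$, and together with \eqref{eq:lLlflTnS} this gives a bijection $\phi\colon\MExt S\pi s\to L$.

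\textbf{Step 2: covers in $L$.} Next we show that $\phi$ is an order isomorphism by comparing covering relations, since finite posets are determined by their Hasse diagrams. In $\MExt S\pi s$, the covering pairs are the $S$-edges with $s_i=0$, together with the consecutive pairs along each subdivided edge. We must show that the covering pairs of $L$ are exactly these. An interior element $x$ of $[u_i,v_i]_L$ has exactly one lower cover and one upper cover in $L$, namely its neighbours in the chain. If $x$ had another lower cover $y$, then $x$ would be join-reducible, so $x\in\Jr L\subseteq S$, which is a contradiction; the dual argument handles upper covers. A covering pair of $L$ with both ends in $S$ is an $S$-edge $(u_i,v_i)$ with $[u_i,v_i]_L=\set{u_i,v_i}$, so $s_i=0$. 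Conversely, an $S$-edge with $s_i=0$ is an $L$-edge, because the interval $[u_i,v_i]_L$ is the two-element chain.

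\textbf{Step 3: conclusion.} Hence the Hasse diagrams agree, so $\phi$ is an order isomorphism and therefore a lattice isomorphism; by Lemma~\ref{lemma:MExt}, $\MExt S\pi s$ is indeed a lattice. This gives $L=\MExt S\pi s$. The main obstacle is the bookkeeping in Step 1, namely making the identification of interior elements well defined; the uniqueness argument via $\dne x$ and $\upe x$ resolves it.
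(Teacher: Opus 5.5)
Your proof is correct and takes essentially the paper's approach. The paper omits the proof, saying only that the lemma follows directly from Lemma~\ref{lemma:svznfmsT}, and you supply exactly the missing details: the chain interiors are pairwise disjoint and disjoint from $S$ (via $\dne x$ and $\upe x$), and the covering relations match. Since you choose $\pi$ arbitrarily, you in fact get the statement for every enumeration $\pi$, which is the form the proof of Theorem~\ref{thm:main}\aref{main1} actually uses when it fixes $\pi$ in advance.
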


The integer part $\max\set{k\in\ZZ: k\leq x}$ of an $x\in \RR$ is denoted by $\lfloor x\rfloor$.
For $p\in(0,1]_\RR$, we also use the notation introduced in \eqref{eq:notaRkkm} to define
\begin{equation*}
k(p):=\lfloor\log_2(1/p)\rfloor+3 \text{ \ and \ }
f(p):=2k(p)\cdot\bigl(R_{k(p)-1}(k(p))+1\bigr).
\end{equation*}

\begin{lemma}\label{lemma:skelconfine}
For each positive real number $p\leq 1$ and every finite lattice $L$, the inequality $\cd L \geq p$ implies $|\Skel L| \leq f(p)$.
\end{lemma}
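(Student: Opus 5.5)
We bound $|\Skel L|$ through the reducibility number, using Lemma~\ref{lemma:ineqLSumRS}: it suffices to show $\rno L\leq f(p)$ whenever $\cd L\geq p$. Put $k:=k(p)$ and $t:=R_{k-1}(k)$. Since $k=\lfloor\log_2(1/p)\rfloor+3>\log_2(1/p)$, we have $2^{-k}<p\leq\cd L$. Hence every inequality of the form $\cd L\leq 2^{-m}$ with $m\geq k$ is forbidden. This single observation will produce the two bounds we need.

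\emph{Step 1: few reducible elements.} By \eqref{eq:mncgrdlcd}, $2^{-|\Jr L|}\geq\cd L>2^{-k}$, so $|\Jr L|\leq k-1$. Dually, $|\Mr L|\leq k-1$.

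\emph{Step 2: few covers at each element.} By Lemma~\ref{lemma:ramseyMt}, if some element had at least $t$ upper covers, then $\cd L\leq 2^{-k}$, which is impossible. Hence $\nuc x\leq t-1$ for every $x\in L$. For lower covers we use the dual lattice. The congruences of $L$ and of its dual coincide and $|L|$ is unchanged, so $\cd{}$ is self-dual. Lemma~\ref{lemma:ramseyMt} applied to the dual therefore gives $\nlc x\leq t-1$ for every $x$. (Lemma~\ref{lemma:ramseyMt} needs $k\geq 3$, which holds since $p\leq 1$ gives $k\geq 3$.)

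\emph{Step 3: combine.} From \eqref{eq:SumRSdef},
\[
\rno L\leq |\Jr L|\cdot t+|\Mr L|\cdot t\leq 2(k-1)t\leq 2k(t+1)=f(p).
\]
Then Lemma~\ref{lemma:ineqLSumRS} gives $|\Skel L|\leq f(p)$.

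There is no real obstacle here. The only points needing care are the self-duality of $\cd{}$, which licenses the dual form of Lemma~\ref{lemma:ramseyMt}, and the strict inequality $2^{-k}<p$ coming from the choice of $k(p)$. The slack in $f(p)$ (the $+1$ and the factor $k$ instead of $k-1$) is more than enough.
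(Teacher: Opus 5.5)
Your proof is correct and follows the paper's argument. It uses the same ingredients: Lemma~\ref{lemma:ineqLSumRS}, the bound \eqref{eq:mncgrdlcd} on $|\Jr L|$ and $|\Mr L|$, and Lemma~\ref{lemma:ramseyMt} with its dual on the numbers of covers, all driven by $2^{-k}<p$. The paper argues by contradiction, showing that $\rno L>f(p)$ would force more than $k$ summands or one summand larger than $R_{k-1}(k)+1$. You bound the two sums directly instead, which even gives the slightly sharper estimate $\rno L\leq 2(k-1)R_{k-1}(k)$.
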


\begin{proof} For the sake of contradiction, suppose the contrary, and pick a positive $p\leq 1$ and a finite lattice $L$ such that $\cd L\geq p$ but $|\Skel L|>f(p)$. 
Then $\rno L>f(p)$ by Lemma \ref{lemma:ineqLSumRS}. To ease the notation, let  $k:=k(p)$, and observe that $2^{-k}<p$.
Furthermore, let $\Sigma_1$ and $\Sigma_2$ denote the first sum, subscripted by $x\in\Jr L$, and the second sum, subscripted by $x\in\Mr L$, in \eqref{eq:SumRSdef}. The inequality $\rno L>f(p)=2k\cdot(R_{k-1}(k)+1)$ yields that $\Sigma_1>k\cdot(R_{k-1}(k)+1)$ or $\Sigma_2>k\cdot(R_{k-1}(k)+1)$. By duality, we may assume that $\Sigma_1>k\cdot(R_{k-1}(k)+1)$. 
Thus the number of summands of $\Sigma_1$ is larger than $k$ or one of the summands is larger than $R_{k-1}(k)+1$.
In the first case, $|\Jr L|>k$, whence \eqref{eq:mncgrdlcd} and $2^{-k}<p$ imply $\cd L\leq 2^{-|\Jr L|}\leq 2^{-k}<p$, a contradiction. In the second case, $\nlc x+ 1>R_{k-1}(k)+1$ for some $x\in\Jr L$, that is, 
$\nlc x >R_{k-1}(k)$. This inequality, $k\geq 3$, and the dual of Lemma \ref{lemma:ramseyMt} imply $\cd L\leq 2^{-k}<p$, a contradiction again. The proof of Lemma \ref{lemma:skelconfine} is complete.
\end{proof}

\section{The semimodular case}

\begin{lemma}\label{lemma:semimodSkelCor}
If $L$ is a finite semimodular lattice such that $0\in \Mr L$ and $1\in\Jr L$,  then $\Core L=\Core{\Skel L}$. 
\end{lemma}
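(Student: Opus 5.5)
The plan is to use the coordinatization from Lemma~\ref{lemma:skelCoordn}: with $S:=\Skel L$, we have $L=\MExt S\pi s$. So $L$ arises from $S$ by replacing certain edges $(u,v)\in\Edge S$ with chains $u\prec x_1\prec\dots\prec x_m\prec v$, and every element of $L\setminus S$ is an inner element of such a chain. The key claim is that, \emph{when $L$ is semimodular, every element of $L\setminus S$ is a narrows of $L$.} Granting this, I would compare the gluing edges of $L$ and of $S$ and then collapse them in a suitable order.

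\emph{Step 1 (the key claim, expected main obstacle).} Let $x\in L\setminus S$, with lower cover $a$ and cover $b$ inside the chain $[u,v]_L$ (Lemma~\ref{lemma:svznfmsT}). Then $x$ is doubly irreducible. Suppose, for a contradiction, that $y\parallel x$, and put $w:=x\wedge y$. Since $x\in\Jir L$ and $w<x$, we have $w\leq a$. Pick $y'$ with $w\prec y'\leq y$; then $y'\nleq x$, and hence $y'\nleq a$. The standard consequence of semimodularity (joining the edge $(w,y')$ with $x$, respectively with $a$) gives $x\prec x\vee y'$ and $a\prec a\vee y'$. Since $x$ has only one cover, $x\vee y'=b$, so $a<a\vee y'\leq b$. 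Moreover $a\vee y'\neq x$ because $y'\nleq x$, and $a\vee y'\neq b$ because $a\prec a\vee y'$ whereas $a\prec x<b$. Thus $a\vee y'$ is a second lower cover of $b$, so $b\in\Jr L$. Then $x\in\LoCov b\subseteq S$ by \eqref{eq:szlSklSubL}, a contradiction. Hence $x\in\Nar L$. I expect this to be the delicate part: one must use the right form of semimodularity and check carefully that the two covers of $b$ are distinct.

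\emph{Step 2 (endpoints).} If the edge $(u,v)$ of $S$ is subdivided, then $u$ and $v$ are also narrows of $L$. Indeed, let $x_1$ be the lowest inner element. For any $y$, $y$ is comparable with $x_1$. If $y\geq x_1$, then $y\geq u$. If $y<x_1$, then $y\leq u$, because $x_1\in\Jir L$ with lower cover $u$. The argument for $v$ is dual. Consequently, every edge of a subdivided chain is a gluing edge of $L$, and $(u,v)$ is a gluing edge of $S$, since narrows of $L$ lying in $S$ are narrows of $S$.

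\emph{Step 3 (comparing cores).} For $a\in S$, being a narrows in $S$ is equivalent to being a narrows in $L$. One direction is trivial. For the other, an element $y\in L\setminus S$ lies in some $(u,v)_L$ with $u\prec_S v$. A narrows $a$ of $S$ is comparable with $u$ and $v$, so $a\leq u$ or $a\geq v$, and therefore $a$ is comparable with $y$. Hence the unsubdivided edges of $S$, which are exactly the edges of $L$ not lying inside subdivided chains, have the same gluing status in $S$ and in $L$.

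Now I would collapse gluing edges of $L$ in the following order. First collapse all edges of each subdivided chain $[u,v]_L$ except one, which leaves exactly the lattice $S$. Then collapse that remaining edge $(u,v)$ together with the remaining gluing edges; these are precisely the gluing edges of $S$. Since the core does not depend on the order of collapsing, and by Lemma~\ref{lemma:Corelem} its edge set is the set of non-gluing edges, this yields $\Core L=\Core{\Skel L}$.
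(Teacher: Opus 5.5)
Your proposal is correct. Its overall architecture matches the paper's proof. In both, one shows three things: a subdivided edge of $S=\Skel L$ is a gluing edge of $S$; its subdivision consists of gluing edges of $L$; and $\Nar S=\Nar L\cap S$. Your Step 3 is essentially the paper's argument for the last of these. Both then finish with the same collapsing argument, relying, as the paper does, on the order-independence built into the definition of the core.

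The genuine difference is in the key step. The paper argues edge by edge: if an edge $(u,v)$ of $S$ has an endpoint outside $\Nar L$, then $[u,v]_L=\{u,v\}$. It proves this by a case analysis: first $v\notin\Nar L$, with a witness either above $u$ or incomparable with $u$; then $u\notin\Nar L$. Each case uses semimodularity together with $\Jr L\cup\Mr L\subseteq S$ to contradict $u\prec_S v$ or $u<x<v$.

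You instead prove the pointwise statement that every $x\in L\setminus S$ is a narrows of $L$. This uses only that $x$ has a unique cover $b$ and is not a lower cover of a join-reducible element. A single application of semimodularity gives $x\vee y'=b$, and this forces $b\in\Jr L$. Narrowness of the endpoints $u,v$ then follows cheaply from $x_1\in\Jir L$ and its dual. This is shorter and more transparent than the paper's case analysis, and it isolates exactly where semimodularity enters.

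One minor remark: the detour through $a\vee y'$ is unnecessary. As written, your assertion that $a\vee y'$ is itself a lower cover of $b$ would need one more semimodularity step. Once $x\vee y'=b$, simply note that $y'\neq b$, since otherwise $x<b\leq y$. So $y'<b$ with $y'\nleq x$, and any lower cover of $b$ above $y'$ differs from $x$. Hence $b\in\Jr L$ at once.
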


\begin{proof} 
Let $S:=\Skel L$. We show first that for every edge $(u,v)$ of $S$,
\begin{equation}
\text{if }\set{u,v}\nsubseteq\Nar L\text{, then }[u,v]_L=[u,v]_S=\set{u,v}.
\label{eq:mktHhzLJvnHl}
\end{equation}

For the sake of contradiction, suppose that \eqref{eq:mktHhzLJvnHl} fails, and pick an edge $(u,v)$ of $S$ that witnesses this failure.
So $\set{u,v}\nsubseteq\Nar L$ but $[u,v]_L\neq \set{u,v}$.
Pick a minimal element $x$ from $[u,v]_L\setminus \set{u,v}$, that is, $x\in L$ with $u\prec x<v$. The non-subscripted  relational symbol $\prec$ in the proof is understood in $L$, and the covering in $S$ is denoted by $\prec_S$.

\begin{caseinproof}{Case 1} We assume that $v\notin \Nar L$. Pick an element $b\in L$ with $b\parallel v$. Clearly, $b\nleq u$. 

\begin{caseinproof}{Subcase 1a} We assume that $u\leq b$, in addition to  $b\parallel v$. In fact, $u<b$, since $u=b$ would violate $b\parallel v$. 
Since $u\leq b\wedge v<v$, $b\wedge v\in\Mr L\subseteq S$, and $u\prec_S v$, we obtain $b\wedge v=u$.  Pick an element $c\in L$ such that $u\prec c\leq b$. 
If $x$ were equal to $c$, then $u=b\wedge v\geq c\wedge x=x$ 
would be a contradiction. Thus $c$ and $x$ are distinct covers of $u$ in $L$, and so $y:=c\vee x\in\Jr L\in S$.
Since $L$ is semimodular, $x=x\vee u\preceq x\vee c=y$. 
In fact, $x\prec y$, as $x\neq y$. This covering relation and $x<v$ rule out that $v<y$. Since $y\leq v$ would lead to $c\leq b\wedge y\leq b\wedge v=u$, a contradiction, we have $y\nleq v$. Therefore $v\parallel y$, whence $v>v\wedge y\in\Mr L\subseteq S$. The membership $v\wedge y\in S$ together with $u<x\leq v\wedge y<v$ contradicts $u\prec_S v$. Consequently, Subcase 1a cannot occur.
\end{caseinproof} %End of Subcase 1a

\begin{caseinproof}{Subcase 1b} We assume that $u\parallel b$, in addition to  $b\parallel v$. Let $d:=b\wedge v$. 
Since $d\geq u$ would yield $b\geq u$, a contradiction, either $d\parallel u$ or $d<u$. 

For the sake of contradiction, suppose that $d\parallel u$. Then $d\wedge u<u$, and we may pick an element $y\in L$ such that $d\wedge u\prec y\leq d$. 
Since $u\leq y$ would lead to $u\leq d$ and $y\leq u$ to $y\leq d\wedge u$, we have $u\parallel y$. Hence $u<u\vee y\leq 
u\vee d\leq v$. Since $u\vee y\in \Jr L\subseteq S$, we obtain from $u<u\vee y\leq v$ and $u\prec_S v$ that $u\vee y=v$. Thus, by semimodularity and $d\wedge u\prec y$, we have $u=u\vee (d\wedge u)\preceq u\vee y=v$, which contradicts $u<x<v$. 

Therefore $d\nparallel u$, and we have $d<u$. We also have $d<b$ (as $b\parallel v$), whence we can pick an element $z$ with $d\prec z\leq b$. We have $z\nleq v$, since otherwise 
$z\leq b\wedge v=d$ would contradict $d\prec z$. Hence $x\vee z\nleq v$. Since $x=x\vee d \preceq x\vee z$ by semimodularity and $x<v$, we cannot have $v<x\vee z$. Thus $v\parallel x\vee z$, whence $v\wedge(x\vee z)\in\Mr L\subseteq S$. Thus $u<x\leq v\wedge(x\vee z) <v$ contradicts $u\prec_S v$ and rules out Subcase 1b.  Moreover, Case 1 is also ruled out.
\end{caseinproof} %End of Subcase 1b
\end{caseinproof}

\begin{caseinproof}{Case 2} We assume that $u\notin\Nar L$. Pick an element $e\in L$ such that $u\parallel e$. Since Case 1 is already excluded and so $v\in \Nar L$, we obtain that $e\leq v$. As $u\parallel e$, we may pick an element $w\in L$ such that $e\wedge u\prec w\leq e$. 
Since $u\leq w$ would lead to $u\leq e$ while $w\leq u$ to
$w\leq e\wedge u$, we have $u\parallel w$. Hence $u<u\vee w\in\Jr L\subseteq S$. Furthermore, $u\vee w\leq v\vee e\leq v$. Combining $u<u\vee w\leq v$ with $u\prec_S v$ and $u\vee w\in S$, we obtain that $u\vee w=v$. By semimodularity, $u=u\vee(e\wedge u)\preceq  u\vee w=v$. This contradicts $u<x<v$ and excludes Case 2. 
\end{caseinproof}

All cases of the indirect assumption have been excluded. Therefore, we have proved \eqref{eq:mktHhzLJvnHl}.

Next, we show that 
\begin{equation}
\Nar S =\Nar L\cap S.
\label{eq:narrestrtoS}
\end{equation}
Since $S$ is a sublattice of $L$, the inclusion $\Nar S \supseteq\Nar L\cap S$ is obvious. To show the converse inclusion, let $x\in \Nar S$. Then $x\in S$. 
To show that $x\in \Nar L$, assume that $y\in L$.
By Lemma \ref{lemma:svznfmsT}, there is an edge $(u,v)$ of $S$ such that $y\in[u,v]_L$. If $u=x$ or $v=x$, then $x\nparallel y$ is clear.
Hence we may assume that $x\notin \set{u,v}=[u,v]_S$. Since $x\in\Nar S$, $x\nparallel u$ and $x\nparallel v$. Hence either $x<u<v$ and $x\leq y$, or $u<v<x$ and $y\leq x$, whence $x\nparallel y$ again. This proves the converse inclusion and \eqref{eq:narrestrtoS}.

For a gluing edge $(u,v)$ of $S$, let $\set{x_1,\dots,x_{k-1}}:=[u,v]_L\setminus\set{u,v}$. Since this set is a chain by Lemma \ref{lemma:svznfmsT}, we may choose the subscripts so that $x_0:=u\prec_L x_1 \prec_L x_2 \prec_L \cdots \prec_L x_{k-1} \prec_L x_k:=v$. We claim that
\begin{equation}
[x_{i-1},x_i]\text{ is a gluing edge of }L\text{ for all }i\in\set{1,\dots,k}.
\label{eq:yTnbTsHsKnzs}
\end{equation}
To see this, it suffices to show that $x_i\in\Nar L$ for all $i\in\set{0,1,\dots,k}$. We may assume that  $i\in\set{1,\dots,k-1}$, since $x_0=u\in \Nar S\subseteq \Nar L$ and $x_k=v\in\Nar L$ by \eqref{eq:narrestrtoS}. Let $y\in L$. If $y\in [u,v]_L$, then $x_i\nparallel y$ follows from the fact that $[u,v]_L$ is a chain by Lemma \ref{lemma:svznfmsT}. Assume $y\notin [u,v]_L$. Since $y\nparallel u$ and $y\nparallel v$ by $u,v\in\Nar S\subseteq \Nar L$---see \eqref{eq:narrestrtoS}---, either $y\leq u$ or $y\geq v$, whence either $y\leq x_i$ or $y\geq x_i$, and so $x_i\nparallel y$ again. We have shown the validity of \eqref{eq:yTnbTsHsKnzs}.  

If $(u,v)$ is a non-gluing edge of $S$, then $u\notin\Nar S$ or $v\notin\Nar S$, whence \eqref{eq:narrestrtoS} implies that $\set{u,v}\nsubseteq \Nar L$ . Thus \eqref{eq:mktHhzLJvnHl} yields that for every non-gluing edge $(u,v)$ of $S$,  $[u,v]_L=[u,v]_S$ and $(u,v)$ is a non-gluing edge of $L$. This fact,  \eqref{eq:lLlflTnS}, and \eqref{eq:yTnbTsHsKnzs} yield that we obtain $L$ from $S$ in the following way: we keep the non-gluing edges of $S$, which are also kept when constructing $\Core S$ and $\Core L$, and we turn every gluing edge into one or more gluing edges, which are collapsed when forming the cores. Therefore, $\Core L=\Core{S}$, completing the proof of Lemma \ref{lemma:semimodSkelCor}.
\end{proof}

\begin{lemma}\label{lemma:SMfinite} For each positive real number $p$, the set 
\begin{equation*}
H_p:=\{\cd L: L\text{ is a finite semimodular lattice and }\cd L \geq p\}
%\label{eq:frCstnCllrK}
\end{equation*}
is finite.
\end{lemma}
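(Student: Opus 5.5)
Fix $p>0$; we may assume $p\le 1$, since $\cd L\le 1$ for every finite lattice $L$ (as $|\Con L|\leq 2^{|L|-1}$), so $H_p=H_1$ when $p>1$. The plan is to show that every value in $H_p$ is the congruence density of a lattice taken from a fixed finite list of lattices of bounded size. Since there are only finitely many lattices of a given size, finiteness of $H_p$ then follows.

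First, reduce to the case $0\in\Mr L$ and $1\in\Jr L$. Let $L$ be a finite semimodular lattice with $\cd L\ge p$. By Lemma~\ref{lemma:trunC}, the interval $K=[u,v]_L$, where $u$ and $v$ are the least and greatest elements of $\Skel L$, satisfies $0_K\in\Mr K$, $1_K\in\Jr K$, and $\cd K=\cd L$. Since $K$ is an interval of a semimodular lattice, it is itself semimodular: covering in $K$ agrees with covering in $L$, and joins in $K$ are the same as in $L$. The degenerate case must be treated separately. If $\Skel L$ is empty, or more generally if $L$ has no join-reducible element, then $L$ is a chain and $\cd L=1$. So from now on we assume $0\in\Mr L$ and $1\in\Jr L$.

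Next, apply Lemma~\ref{lemma:semimodSkelCor} to get $\Core L=\Core{\Skel L}$. Combined with Lemma~\ref{lemma:Corelem}, this gives
\[
\cd L=\cd{\Core L}=\cd{\Core{\Skel L}}.
\]
By Lemma~\ref{lemma:skelconfine}, $|\Skel L|\le f(p)$. Forming the core only removes elements, so $|\Core{\Skel L}|\le f(p)$ as well. Hence $\cd L$ lies in the finite set $\set{\cd M: M \text{ a lattice with } |M|\le f(p)}$, and $H_p$ is finite.

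The main obstacle is already handled by the earlier lemmas: Lemma~\ref{lemma:semimodSkelCor} shows that, under semimodularity, $L$ differs from its skeleton only by subdivisions of gluing edges, and Lemma~\ref{lemma:skelconfine} bounds the size of the skeleton. What remains is bookkeeping. One must check that semimodularity passes to the interval $K$, so that Lemma~\ref{lemma:semimodSkelCor} can be applied to $K$ rather than to $L$. One must also handle the chain case separately, where the hypotheses $0\in\Mr L$ and $1\in\Jr L$ fail.
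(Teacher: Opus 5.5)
Your proof is correct and follows essentially the same route as the paper's. The paper picks, for each $r\in H_p$, a minimum-size semimodular $L_r$ with $\cd{L_r}=r$ and uses Lemma~\ref{lemma:trunC} to force $0\in\Mr{L_r}$ and $1\in\Jr{L_r}$, whereas you apply Lemma~\ref{lemma:trunC} directly; after that, both arguments combine Lemmas~\ref{lemma:skelconfine}, \ref{lemma:semimodSkelCor}, and~\ref{lemma:Corelem} identically, and your explicit checks (intervals of semimodular lattices are semimodular, chains have density~$1$, one may assume $p\le 1$) fill in points the paper leaves implicit. One harmless slip: for $p>1$ the set $H_p$ is empty rather than equal to $H_1$, and it is of course still finite.
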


\begin{proof} Assume that $0<p\in\RR$. 
For each $r\in H_p$, pick a finite semimodular lattice $L_r$ of minimum size such that $\cd{L_r}=r$. 
Observe that $0\in\Mr {L_r}$ and $1\in \Jr {L_r}$, since otherwise 
Lemma \ref{lemma:trunC}, applied to $L_r$, would yield a smaller lattice with the same congruence density.
By $r\in H_p$, we have that $\cd {L_r}=r\geq p$. Thus Lemma \ref{lemma:skelconfine} gives that $|\Skel{L_r}|\leq f(p)$.

Let $K_r:=\Core {\Skel{L_r}}$.
Since $|\Core T|\leq |T|$ for every finite lattice $T$, we have that $|K_r|\leq |\Skel{L_r}|$, whence $|K_r|\leq f(p)$. 
By Lemma \ref{lemma:semimodSkelCor}, $\Core {L_r}=K_r$. 
Thus, applying Lemma \ref{lemma:Corelem}, 
$r=\cd{L_r}=\cd{\Core{L_r}}=\cd{K_r}$.
Since $r$ was an arbitrary member of $H_p$, it follows from
$r=\cd{K_r}$ and  $|K_r|\leq f(p)$ that each element of $H_p$ is the congruence density of an at most $f(p)$-element lattice. Since there are only finitely many such lattices, we conclude the validity of Lemma \ref{lemma:SMfinite}.
\end{proof}

\section{The rest of the proofs}\label{sect:rstprfs}

We are now in the position to prove the main result.

\begin{proof}[Proof of Theorem \ref{thm:main}]
To prove part \aref{main1} by way of contradiction, suppose that it fails. 
Since $1\in \SCD$, witnessed by any finite chain, and $\SCD$ is closed under multiplication by \eqref{eq:cdgsum}, $\SCD$ is a monoid. Hence, by Lemma \ref{lemma:trunC} and the failure of \aref{main1}, there exists an infinite sequence $L_1$, $L_2$, $L_3$, \dots{} of finite lattices such that 
\begin{align}
0<p:=\cd{L_1} < \cd{L_2} < \cd{L_3} < \cd{L_4} < \dots
\label{eq:cCslMbztT}\\
\text{and, for all }n\in\Nplu,\text{ }
0_{L_n}\in\Mr{L_n}\text{ and }1_{L_n}\in\Jr{L_n}.
\label{eq:hbdtljnPlmnt}
\end{align}
We know from Lemma \ref{lemma:skelconfine} that $|\Skel{L_n}|\leq f(p)$ for all $n\in\Nplu$. Up to isomorphism, there are only finitely many at most $f(p)$-element lattices. Hence there exists a finite lattice $S$ such that $S\cong \Skel{L_n}$ holds for infinitely many $n\in\Nplu$. 
Ruling out every $L_n$ with $S\ncong \Skel{L_n}$ from the sequence \eqref{eq:cCslMbztT}, the sequence remains infinite. Therefore we may assume (after re-indexing the remaining elements) that $\Skel{L_n}=S$ for all $n\in\Nplu$.

Fix a permutation $\pi$ of $\Edge S$. 
For each $n\in \Nplu$, Lemma \ref{lemma:skelCoordn} together with \eqref{eq:hbdtljnPlmnt} yields an $|\Edge S|$-tuple $\tups n\in\Nnul^{|\Edge S|}$ such that $L_n=\MExt S \pi{\tups n}$.
By Lemma \ref{lemma:woQo}, there exist  $i,j\in\Nplu$ such that $i<j$ and $\tups i\leq \tups j$. The second part of Lemma \ref{lemma:MExt} yields that $L_j=\MExt S \pi{\tups j}$ is a dismantlable extension of $L_i=\MExt S \pi{\tups i}$.
Thus \eqref{eq:dismeXt} implies $\cd{L_i}\geq \cd{L_j}$, contradicting \eqref{eq:cCslMbztT} and thereby proving part~\aref{main1}.

\logicbreak
To prove part \aref{main2}, let $k\in\Nplu$ with $k\geq 5$.
Take the four-element Boolean lattice $B_4$, pick a permutation $\pi$ of $\Edge{B_4}$, let $s:=(k-4,0,0,0)$, and 
define $N_k:=\MExt{B_4}\pi s$. 
Note that $N_k$ does not depend on~$\pi$, that $|N_k|=k$, and that $N_5$ has its usual meaning, the five-element non-modular lattice. Let $t:=k-3$. Then $N_k$ consists of a maximal chain $0\prec a_1\prec a_2\prec \dots\prec a_t\prec 1$ and a common complement $b$ of each of $a_1$, \dots, $a_t$. 

For $i\in\set{1,\dots, t-1}$, let $\Theta_i\in\Con{N_k}$ be the congruence whose  blocks are $\set{a_1,\dots,a_i}$,  $\set{a_{i+1},\dots, a_t}$, $\set 0$, $\set b$, and $\set 1$.  Since $\Theta_1\wedge \dots \wedge \Theta_{t-1}$ is the least congruence $0_{\Con{N_k}}$ of $N_k$ and, for each $i\in\set{1,\dots, t-1}$, $N_k/\Theta_i\cong N_5$,  
\begin{equation}
N_k\text{ is a subdirect power of }N_5.
\label{eq:subdiRpow}
\end{equation}

Now let $\varW$ be a variety containing a nonmodular lattice. Since $N_5$ is a sublattice of this lattice by Dedekind's modularity criterion, $N_5\in \varW$. Thus \eqref{eq:subdiRpow} yields that for all $k\in \Nplu$ with $k\geq 8$, $N_k\in\varW$. 
We know from \cite[Theorem 2(iii)]{czg864} that
$\cd{N_k}=(8+3/2^{k-7})/64$. It is trivial, and we know from \cite{czg-lconl2} or \cite[Theorem 1(B)]{czg864}, that $\cd{B_4}=1/2$. Define 
\begin{equation}
L_{k,n}=N_k\gsum \underbrace{B_4\gsum B_4\gsum B_4\gsum\dots\gsum B_4}_{n\text{ glued-sum summands}}.
\label{eq:kSzdlKcnsrvn}
\end{equation}
Since $N_k\in\varW$ and $B_4\cong C_2\times C_2\in \varW$, we obtain from \eqref{eq:varWgsumclosed} and \eqref{eq:kSzdlKcnsrvn} that $L_{k,n}\in\varW$ for all $n\in\Nplu$ and $8\leq k\in\Nplu$. 
Applying \eqref{eq:cdgsum} to \eqref{eq:kSzdlKcnsrvn}, we obtain that 
\begin{align*}
\cd {L_{k,n}}%&=\cd{N_k}\cdot \cd{B_4}^n\cr
&= \bigl((8+3/2^{k-7})/64\bigr)\cdot (1/2)^n=2^{-(n+3)}+3/2^{n+k-1}.
\end{align*}
Hence, for each fixed $n\in\Nplu$, 
$\lim_{k\to\infty} \cd{L_{k,n}} = 2^{-(n+3)}$. Thus, 
for each $n\in\Nplu$, $2^{-(n+3)}\in \Acc{\vwSCD}$.
Therefore, $|\Acc{\vwSCD}|\geq \aleph_0$.  

To obtain the converse inequality, observe that $\vwSCD\subseteq \SCD$. Therefore, since $\SCD$ is a dually well-ordered subset of $\RR$ by part \aref{main1}, so is $\vwSCD$.
Consequently, by the dual of Lemma \ref{lemma:wLlordH}, $|\Acc{\vwSCD}|\leq |\Cls{\vwSCD}|\leq \aleph_0$. We have proved part \aref{main2} of the theorem.

\logicbreak
To prove part \aref{main3}, observe that all finite chains are semimodular. Hence  $1=\cd{C_2}\in \vsSCD$ and, furthermore, \eqref{eq:cddef2} and \eqref{eq:Alllnoa} imply $\vcd\varS L=\cd L$ for every finite $L\in\varS$.  
It is straightforward to see that whenever $K$ and $L$ are finite semimodular lattices, then so is $K\gsum L$. Thus \eqref{eq:cdgsum} implies that $\vsSCD$ is a submonoid of $\SCD$. As before, $B_4:=C_2\times C_2$. Since $1/2=\cd{B_4}\in \vsSCD$, and $\vsSCD$ is closed under products, $(1/2)^n\in\vsSCD$ for all $n\in\Nplu$. 
Thus $0=\lim_{n\to\infty}(1/2)^n\in\Acc\vsSCD$. Seeking a contradiction, suppose that $0<p\in\Acc\vsSCD$. 
Lemma \ref{lemma:SMfinite} implies that
$H_{p/2}=\{\cd L: L\in \varS$, $\cd L\geq p/2$, and $L$ is  finite$\}$ is a finite set. This contradicts $p\in\Acc\vsSCD$, proving part \aref{main3}. 
The proof of Theorem \ref{thm:main} is complete.
\end{proof}

\begin{proof}[Proof of Corollary \ref{corol:Vmod}]
Let $\varW$ be a nontrivial variety of lattices. 
It follows from $1=\cd{C_2}\in\vwSCD$, \eqref{eq:varWgsumclosed}, and \eqref{eq:cdgsum} that $\vwSCD$ is a submonoid of $\SCD$, so $\vwSCD$ is closed under products. Hence $2^{-n}=\cd{B_4}^n\in\vwSCD$ for all $n\in\Nplu$, whence $|\vwSCD|\geq \aleph_0$. 
Since $\SCD$ is dually well-ordered with respect to $\RR$ by Theorem \ref{thm:main}\aref{main1}  and   $\vwSCD\subseteq \SCD$, we obtain that
\begin{equation}
\vwSCD \text{ is also dually well-ordered with respect to }\RR.
\label{eq:KLlvKvjlHrgh}
\end{equation}
Hence the dual of 
Lemma \ref{lemma:wLlordH} yields $|\vwSCD|\leq |\Cls\vwSCD|\leq \aleph_0$. Combining this with  $|\vwSCD|\geq \aleph_0$, established earlier, we have proved part \bref{cRl1}.

The dual of Lemma \ref{lemma:wLlordH}, together with \eqref{eq:KLlvKvjlHrgh}, also yields that $\Cls\vwSCD$ is dually well-ordered with respect to $\RR$. 
Since $\vwSCD$ is a monoid  by the previous paragraph and multiplication in $\RR$ is continuous, it follows that $\Acc\vwSCD$ and $\Cls\vwSCD$ are closed under multiplication. These facts, together with $1\in\vwSCD\subseteq\Cls\vwSCD$, imply that $\Cls\vwSCD$ is a dually well-ordered monoid with respect to $\RR$, that $\Acc\vwSCD$ is a subsemigroup of it, and 
that $\vwSCD$ is a submonoid of it.
Finally, since $1/2=\cd{B_4}$ and therefore all its powers belong to $\vwSCD$, we have $0=\lim_{n\to\infty}(1/2)^n\in\Acc\vwSCD$. We have proved part \bref{cRl2}.

Since $0\in\Acc\vwSCD$, we know that $|\Acc\vwSCD|\geq 1$.  
Assume that $|\Acc\vwSCD| > 1$, and pick a number $p\in\Acc\vwSCD\setminus\set 0$.  
If $1$, which is the largest possible congruence density, belonged to $\Acc\vwSCD$, then $\vwSCD$---and hence $\Cls\vwSCD$---would contain an infinite strictly increasing sequence tending to $1$, which would contradict \bref{cRl2}.
Thus $p\notin\{0,1\}$, and the powers $p^n$ ($n\in\Nplu$) are distinct elements of the semigroup $\Acc\vwSCD$.  
Hence $|\Acc\vwSCD|\geq\aleph_0$.  
The converse inequality follows from \bref{cRl1} and $\Acc\vwSCD\subseteq\Cls\vwSCD$.  
Therefore $|\Acc\vwSCD|=\aleph_0$, and we have proved part \bref{cRl3}.

It is well known that every finite modular lattice is semimodular; see, for example, Gr\"atzer \cite[Lemma 2.21]{GGbypicture}. 
Therefore \bref{cRl4} follows from \aref{main2} and \aref{main3} of Theorem \ref{thm:main}, and the proof of Corollary \ref{corol:Vmod} is complete.
\end{proof}

\begin{proof}[Proof of Corollary \ref{corol:modL}] By Lemma \ref{corol:Vmod}\bref{cRl2},  $0\in\Acc{\Var L}$. Hence \quot{exactly one} and \quot{at most one} are equivalent, and Corollary \ref{corol:Vmod}\bref{cRl4} implies Corollary \ref{corol:modL}.
\end{proof}

\begin{proof}[Proof of Corollary \ref{corol:scnd}]
As in \eqref{eq:KLlvKvjlHrgh}, $\vwSCD$ is dually well-ordered with respect to $\RR$. Thus the dual of the second half of Lemma \ref{lemma:wLlordH} implies Corollary \ref{corol:scnd}.
\end{proof}

\begin{proof}[Proof of Corollary \ref{corol:rescvmodL}] 
Let $\varW:=\Var L$. Then $L$ is modular if and only if $\varW \subseteq \vari M$. 
By Corollary \ref{corol:Vmod}\bref{cRl1}--\bref{cRl2},  $\vwSCD$ is infinite and it is dually well-ordered with respect to $\RR$. The order type of an infinite well-ordered set $(T;\leq)$ is $\omega$ if and only if $\set{x\in T: x< u}$ is finite for every $u\in T$. 
Thus, by duality and because of $\vwSCD\subseteq (0,1]_\RR$, 
the order type of $\vwSCD$ is $\omega^\ast$ if and only if 
\begin{equation}
\text{for every }u\in\vwSCD\text{, } (u,1]_\RR\cap \vwSCD
\text{ is finite.}
\label{eq:fbthlMvGs}
\end{equation}
Therefore, by Corollary \ref{corol:Vmod}\bref{cRl4}, we need to show that $\vwSCD$ has exactly one accumulation point if and only if \eqref{eq:fbthlMvGs} holds.
Equivalently, since $0\in\Acc\vwSCD$ by Lemma \ref{corol:Vmod}\bref{cRl2},  it suffices to show that 
\begin{equation}
\vwSCD\text{ has a positive accumulation point}\iff\text{\eqref{eq:fbthlMvGs} fails}.
\label{eq:mKrFlBlLCRkFtkn}
\end{equation}
To verify the \quot{$\Leftarrow$} part of \eqref{eq:mKrFlBlLCRkFtkn}, assume that \eqref{eq:fbthlMvGs} fails. 
Pick an element $u\in\vwSCD$ such that $(u,1]_\RR\cap \vwSCD$ is infinite. 
Then $[u,1]_\RR$ contains infinitely many elements from $\vwSCD$, whence it contains an accumulation point $d$ of $\vwSCD$. Since $u\in\vwSCD$, we have that $0<u\leq d$. 
Thus  $\vwSCD$ has a positive accumulation point. This proves the \quot{$\Leftarrow$} part of \eqref{eq:mKrFlBlLCRkFtkn}. 

To verify the \quot{$\Rightarrow$} part, let $p$ be a positive accumulation point of $\vwSCD$. 
By Corollary \ref{corol:scnd}, $p$ is a right accumulation point. Hence $(p,1]_\RR\cap \vwSCD$ is infinite.
Since $0$ is also an accumulation point of $\vwSCD$, there exists an element $u\in\vwSCD$ such that $0<u<p$.
As $(p,1]_\RR\cap \vwSCD\subseteq (u,1]_\RR\cap \vwSCD$, the set $(u,1]_\RR\cap \vwSCD$ is also infinite. Hence  \eqref{eq:fbthlMvGs} fails, and we have shown the \quot{$\Rightarrow$} part of \eqref{eq:mKrFlBlLCRkFtkn}.  Thus \eqref{eq:mKrFlBlLCRkFtkn} holds, and the proof of Corollary \ref{corol:rescvmodL} is complete.
\end{proof}

\end{document}